\documentclass[12pt]{article}

\usepackage{amsmath, amsthm}
\usepackage{amscd}
\usepackage{amssymb}
\usepackage{array}
\usepackage{amsfonts}
\usepackage{mathtools}
\usepackage{color}
\usepackage{graphicx}
\usepackage{xfrac}
\usepackage{authblk}
\usepackage{ytableau}
\usepackage{array}

\usepackage{enumitem}

\newtheorem{thm}{Theorem}[section]
\newtheorem{theorem}[thm]{Theorem}

\newtheorem{lemma}[thm]{Lemma}
\newtheorem{conjecture}[thm]{Conjecture}
\newtheorem{proposition}[thm]{Proposition}
\theoremstyle{definition}
\newtheorem{definition}[thm]{Definition}
\newtheorem{example}[thm]{Example}

\newtheorem{remark}[thm]{Remark}

\newtheorem{observation}[thm]{Observation}


\newcommand{\Z} {\mathbb{Z}}
\newcommand{\C} {\mathbb{C}}

\newcommand{\R}{\mathbb{R}}

\newcommand{\cA}{\mathcal{A}}

\newcommand{\cO}{\mathcal{O}}



\newcommand{\rGL}{{\mathrm{GL}}}

\newcommand{\rSL}{\mathrm{SL}}

\newcommand{\Hom}{\mathrm{Hom}}

\newcommand{\rM}{\mathrm{M}}

\newcommand{\Ber}{\mathrm{Ber}}

\newcommand{\beq}{\begin{equation}}
\newcommand{\eeq}{\end{equation}}

\title{{\bf On Fundamental Theorems of Super Invariant Theory}}
\begin{document}

\centerline{{\Large \bf On Fundamental Theorems of Invariant Theory}}

\bigskip
\centerline{{\Large \bf for the Special Linear Supergroup}}

\vskip 1cm
\bigskip

\centerline{ J. Razzaq }

\smallskip
\centerline{\it Mathematical Institute of Charles University.}
 \centerline{\it Sokolovsk\'a 83, Prague, Czech Republic.}

\centerline{{\footnotesize e-mail: junaid.razzaq@matfyz.cuni.cz}}

\bigskip

\centerline{ R. Fioresi}
\smallskip
\centerline{\it Fabit, Universit\`a di
Bologna.}

{\centerline{\it Via San Donato, 15. 40126 Bologna, Italy.}}

\centerline{\it and INFN,  Sezione di
    Bologna, Italy}

\centerline{{\footnotesize e-mail: rita.fioresi@unibo.it}}

\bigskip
\centerline{ M. A. Lled\'o }

\smallskip

 \centerline{\it  Departament de F\'{\i}sica Te\`{o}rica,
Universitat de Val\`{e}ncia and }

\centerline{\it IFIC (CSIC-UVEG).}
 \centerline{\it C/Dr.
Moliner, 50, E-46100 Burjassot (Val\`{e}ncia), Spain.}
 \centerline{{\footnotesize e-mail: maria.lledo@ific.uv.es}}

\vskip 1cm

\begin{abstract}
The purpose of this paper is to prove the First and Second Fundamental Theorems of invariant theory for
the complex special linear supergroup and discuss the superalgebra of invariants,
via the super Pl\"ucker relations.
\end{abstract}

\section{Introduction}

Since its origins, in the
XIXth century \cite{cayley},
classical invariant theory rapidly provided important results,
whose applications involve both, mathematics and physics
(see \cite{borel} for an historical review of the theory).

One of the classical invariant theory main questions,
which led to the spectacular result by Hilbert \cite{hilbert}, is the search for invariants,
with respect to a given group action, in the ring of polynomial functions
(\cite{schur, weyl}).

We can look, for example, at the polynomial functions on the set of $r \times p$
$(r\leq p)$ complex matrices $\mathrm{M}_{r \times p}$, that
are invariant under the action of the complex
special linear group $\mathrm{SL}_r(\C)$:
$$\begin{CD}
\mathbb{C}[\mathrm{M}_{r \times p}] \times \mathrm{SL}_r(\C) @>>>
\mathbb{C}[\mathrm{M}_{r \times p}]\\ (f,g) @>>> f.g, \end{CD}$$ where
$(f.g)(M):=f(gM)
$
and $\mathbb{C}[\mathrm{M}_{r \times p}]$ denotes the algebra of polynomials
functions on the entries of $\mathrm{M}_{r \times p}$.

Let $X_{i_1\dots i_r}$ denote the  $r\times r$ minor formed with the columns $(i_1,\dots , i_r)$ of a matrix in $\mathrm{M}_{r \times p}$.
The First and Second fundamental Theorems of invariant
theory for $\mathrm{SL}_r(\C)$
state the following (see \cite{gw, fulton-h, Fulton} { and Refs. therein}
for a full account).

\begin{theorem} \label{fsft}
Let the notation be as above.
\begin{enumerate}
\item {\bf First Fundamental Theorem (FFT) of invariant theory.}
The ring of invariants $\mathbb{C}[\mathrm{M}_{r \times p}]^{\mathrm{SL}_r(\C)}$
is generated by the minors $X_{i_1\dots i_r}$ of the $r \times r$ submatrices in
$\mathrm{M}_{r \times p}$.
\item {\bf Second Fundamental Theorem (SFT) of invariant theory.}
We have a presentation of the ring of invariants via the ideal $\mathrm{I}$ of the
Pl\"ucker relations:
\begin{align}
&\mathbb{C}[\mathrm{M}_{r \times p}]^{\mathrm{SL}_r(\C)} \cong
\mathbb{C}[X_{i_{1} \dots i_{r}}]\,\big/\,\mathrm{I},\qquad \hbox{            with}\nonumber\\[0.3cm]
&\mathrm{I}:=\left(\sum_{k=1}^{r+1}(-1)^{k} X_{i_{1}...i_{r-1}j_{k}}X_{j_{1} \dots \tilde{\jmath}_{k} \dots j_{r+1}}\right) 
\label{ideal-plucker}
\end{align}
where
\begin{align*}&1 \leq i_1 <\cdots< i_{r-1} \leq  p\\
&1 \leq j_{1}<\cdots<\tilde{\jmath}_{k}< \dots <j_{r+1}\leq p
\end{align*}and
$\tilde{\jmath}_{k}$ means that the index is removed.
\end{enumerate}
\end{theorem}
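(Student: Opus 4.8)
The plan is to treat the classical $\mathrm{SL}_r(\C)$ theorem as a warm-up for the genuinely super case that drives the paper, so I would first recall the standard proof in the purely even setting and then indicate which ingredients survive. This excerpt is the classical statement, so I describe the classical argument.

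\begin{proof}[Proof sketch]
First I would set up the geometry. The group $\mathrm{SL}_r(\C)$ acts on $\mathrm{M}_{r\times p}$ (with $r\le p$) by left multiplication, and the $r\times r$ minors $X_{i_1\dots i_r}$ are manifestly invariant since $\det(gM_{\text{cols}})=\det(g)\det(M_{\text{cols}})=\det(M_{\text{cols}})$ for $g\in\mathrm{SL}_r(\C)$. This gives the easy inclusion: the subalgebra generated by the minors is contained in $\mathbb{C}[\mathrm{M}_{r\times p}]^{\mathrm{SL}_r(\C)}$. The content of the FFT is the reverse inclusion. The cleanest route is via the quotient map $\pi\colon\mathrm{M}_{r\times p}\to\mathrm{M}_{r\times p}/\!/\mathrm{SL}_r(\C)$: on the open locus of full-rank matrices the fibers of the Plücker embedding $M\mapsto(X_{i_1\dots i_r}(M))$ are exactly the $\mathrm{SL}_r(\C)$-orbits, so the minors separate orbits generically and hence generate the invariant ring after checking that no invariants are lost over the lower-rank locus. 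Concretely I would argue that any $\mathrm{SL}_r(\C)$-invariant regular function, restricted to the dense orbit through a full-rank $M$, is determined by the values of the minors, and then extend across the whole space by continuity/normality of $\mathrm{M}_{r\times p}$.

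For the SFT I would exhibit the isomorphism $\mathbb{C}[\mathrm{M}_{r\times p}]^{\mathrm{SL}_r(\C)}\cong\mathbb{C}[X_{i_1\dots i_r}]/\mathrm{I}$ in two halves. The containment $\mathrm{I}\subseteq\ker$ is a direct verification: the expressions $\sum_{k=1}^{r+1}(-1)^k X_{i_1\dots i_{r-1}j_k}X_{j_1\dots\tilde{\jmath}_k\dots j_{r+1}}$ vanish identically as functions on $\mathrm{M}_{r\times p}$, which follows from the Laplace/Sylvester-type expansion of a determinant with a repeated column (expand an $(r+1)\times(r+1)$ determinant in two blocks of columns). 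The harder containment is that $\mathrm{I}$ is the \emph{entire} kernel, i.e.\ the Plücker relations generate all relations among the minors. The standard tool here is the straightening law: the set of \emph{standard} monomials in the $X_{i_1\dots i_r}$ (products of minors whose index rows form a semistandard tableau) forms a vector-space basis of $\mathbb{C}[X_{i_1\dots i_r}]/\mathrm{I}$, and one checks that these standard monomials are linearly independent as functions on $\mathrm{M}_{r\times p}$. A dimension/Hilbert-series comparison, or the explicit straightening algorithm showing every monomial is congruent modulo $\mathrm{I}$ to a $\mathbb{Z}$-linear combination of standard ones, then forces $\ker=\mathrm{I}$.

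The main obstacle is precisely this last step: proving that the straightening algorithm terminates and that the resulting standard monomials are genuinely independent, so that the only relations are the Plücker ones. Equivalently, one must show the Plücker ideal is \emph{prime} (so that the homogeneous coordinate ring of the Grassmannian has no further relations) and of the expected dimension. I would handle this by invoking the classical straightening law of Hodge–Doubilet–Rota–Stein: induct on a term order for the index tableaux, use a quadratic Plücker relation to rewrite any non-standard product as a sum of lexicographically smaller products, and verify the base case of independence on a generic matrix by a Vandermonde-type nonvanishing argument. The subtlety that requires care in the super generalization later—but is automatic here—is the interplay of signs coming from the determinant; keeping careful track of the $(-1)^k$ signs in the relations is what makes the same skeleton adaptable to the $\mathrm{SL}(m|n)$ setting that is the paper's real target.
\end{proof}
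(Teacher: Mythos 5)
You should first be aware that the paper never proves Theorem \ref{fsft}: it is stated in the Introduction as classical background, with the proof delegated to the references \cite{gw, fulton-h, Fulton}. So there is no internal proof to measure your sketch against; the meaningful comparison is with the classical material the paper does develop and with the super analogues it actually proves. On that score your outline is the textbook one, but it differs instructively from the paper's machinery. Where you verify that the quadratic relations lie in the kernel by a Laplace expansion of a determinant with repeated columns, the paper (Proposition \ref{P1} in Appendix \ref{jacobi-app}) derives the Pl\"ucker relations from the Jacobi complementary minor identity, applied to an auxiliary $2r\times 2r$ matrix whose inverse is computed by Cramer's rule; this is precisely the mechanism that survives the passage to Berezinians (Theorem \ref{superjacobi}), whereas column expansions of determinants have no direct super analogue. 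Likewise, for the generation statement the paper's super FFT (Theorem \ref{SFFT}) does not argue by separation of orbits but by an explicit factorization $T=\tilde T_{1\cdots r|\hat{1}\cdots\hat{s}}R$ with $\tilde T$ in the supergroup and the entries of $R$ expressed in (fake) superminors via Cramer's rule, valid on a localization; and its SFT for $\mathrm{SL}(1|1)$ (Theorem \ref{SecFT11}, via Lemma \ref{basis-gen}) is a leading-term linear-independence argument, i.e.\ exactly the standard-monomial philosophy you invoke for the classical straightening law.

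Two points in your sketch would need repair before it stands as a proof. First, there is no dense $\mathrm{SL}_r(\C)$-orbit in $\mathrm{M}_{r\times p}$ when $r<p$ (compare dimensions: $\dim\mathrm{SL}_r(\C)=r^2-1<rp$); what is true is that on the full-rank locus the fibres of the Pl\"ucker map are single orbits, so your phrase ``the dense orbit through a full-rank $M$'' should be replaced by ``the dense open locus of full-rank matrices''. Second, and more seriously, the inference from ``the minors separate the generic orbits'' to ``the minors generate the invariant ring'' is a genuine gap: separating invariants need not generate, and closing this implication is exactly where the classical proofs do their work (via Capelli's identity, via the $\mathrm{GL}_r$ FFT together with a weight decomposition, or via the straightening law combined with normality of the cone over the Grassmannian). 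As written, your FFT paragraph assumes what must be proved; your SFT paragraph, by contrast, correctly isolates the straightening law of Hodge--Doubilet--Rota--Stein as the crux, and is sound once that law is quoted or proved.
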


In this work we want to provide a generalization of this result to supergeometry.\hyphenation{su-per-geo-me-try}
The generalization of some questions of invariant theory to the super setting appeared
early in the literature: for example, the Young super tableaux
 and techniques regarding their manipulation, appeared first in
\cite{jarvis} and at the same time in \cite{br}, to address questions of
representation theory of Lie superalgebras and the double commutant theorem, a fundamental result, originally proven, in the ordinary setting,
by Schur \cite{schur}.

In particular,\hyphenation{re-pre-sen-ta-tion}
in \cite{br}, the authors prove also a version of
the straightening algorithm for Young super tableaux, of uttermost
importance in representation theory and directly linked to the Pl\"ucker relations \cite{Fulton}.
Later on, in the papers \cite{rota, brini},
appeared such algorithm in the super setting.
Their idea, involving the so called {\it virtual variables method}, provides an elegant approach
which encompasses both, the ordinary and the super setting at once.
More recently in \cite{zhang, deligne} appeared generalizations to other supergroups
as the orthosymplectic one and in \cite{coul} a categorical approach to super invariant
theory allowed to go beyond
the characteristic zero setting (see also \cite{coul-ann}).

\hyphenation{dif-fe-rent}

In the present work, however, we are interested in a related, and yet different
topic.
It has its significance rooted into the geometric meaning of
these combinatorial questions. In the ordinary setting Theorem \ref{fsft} provides with
a presentation of the ring of invariants via the Pl\"ucker relations, which
give an embedding of the Grassmannian manifold into a suitable projective space.
In supergeometry, super Grassmannians do not admit, in general,
projective embeddings (see \cite{Manin}, Chapter 4).
However, in \cite{Voronov}, the authors define a new graded
version of super projective space, with negative grading, which allows for such embedding.
Indeed, the Berezinians of sub-supermatrices replace truly the notion of  minors,
which give the projective coordinates
for the classical Pl\"ucker embedding. Then, they allow to proceed
and give the correct supergeometric counterpart of the Pl\"ucker embedding and Pl\"ucker relations \cite{Voronov}. Furthermore, in \cite{Shemyakova}, these super Pl\"ucker relations has been studied from the perspective of constructing super cluster algebras. However, due to the very nature of Berezinians, which are defined only when  the supermatrix is invertible, one is forced to restrict the analysis to an open set. Once this geometric constraint is set in place, one can state and prove the super version of the {First Fundamental Theorem} and progress towards the full proof of the more elusive {Second Fundamental Theorem}.

\medskip
The paper is organized as follows.

\hyphenation{Be-re-zi-nians}
In Section \ref{prelim-sec}, we introduce the notation and a few
facts concerning the Berezinians. We have included in Appendix \ref{super-app}
some very basic notions of supergeometry. We also state the super Cramer rule,
first proved in  \cite{Bergvelt} { with different methods},
which is instrumental for the proofs of the results in the next sections. Due to its importance to the present work, and in order to be self contained, we have included { our}
new proof of super Cramer's rule in Appendix \ref{cramer-app}, which is very much in line with the reasoning of the rest of the paper.

In Section \ref{fft-sec}, we state and prove the {\sl First Fundamental
Theorem of super invariant theory} for the special linear supergroup, Theorem \ref{SFFT}.

In Section \ref{super-jac}, we introduce and prove a super version
of the Jacobi identiy (a classical determinant identity, for details see Appendix \ref{jacobi-app}), Theorem \ref{superjacobi}, one of our main results, which is instrumental for the following sections.

In {Sections \ref{pl-11}, \ref{sec-sft}}
we derive the {\sl super Pl\"ucker relations}  from the super Jacobi identity in a special,
yet illustrative, case (i.e. of $\mathrm{SL}(1|1)$). We also prove {\sl Second Fundamental Theorem of super invariant theory} for $\mathrm{SL}(1|1)$.

In Section \ref{sft-sec}
we derive the {\sl super Pl\"ucker relations} for the case of $\mathrm{SL}(r|s)$ and conjectured {\sl Second Fundamental Theorem of super invariant theory} for $\mathrm{SL}(r|s)$. It is important to note that the super Pl\"ucker relations we derived are same as the one given in \cite{Voronov}, however, we used a different construction based on super Jacobi identity.

\section{Preliminaries} \label{prelim-sec}
In this section we establish our notation on supermatrices. The reader can go to Appendix \ref{super-app} or to references \cite{ccf, fl, Voronov} for
terminology.

Let $A$ be a fixed commutative superalgebra. All entries
of supermatrices are intended to be in $A$.
An \textit{even supermatrix}
\begin{equation} \label{superm}
B= \begin{pmatrix}
    B_{1} &\vline& B_{2}\\
    \hline
    B_{3} &\vline& B_{4}
\end{pmatrix}
\end{equation}
of size $p|q \times r|s$  is a block matrix whose first $r$ columns are even vectors and last $s$ columns are odd vectors. Consequently, $B_{1}$ and $B_{4}$ contain even entries, while $B_{2}$ and $B_{3}$ contain odd entries.

\medskip

We also need to introduce the concept of {\it fake supermatrices}\footnote{These are called `wrong supermatrices' in \cite{Voronov}.}.
\begin{definition}
 A \textit{fake supermatrix of type-I}  is a matrix obtained by inserting an odd vector at an even vector's position of an even supermatrix.
 A \textit{fake supermatrix of type-II}  is a matrix obtained by inserting an even vector at an odd vector's position of an even supermatrix.
\end{definition}

As usual, for any even supermatrix or fake supermatrix, as in (\ref{superm}), we define its {\it parity reversed matrix},
denoted by $B^{\Pi}$ by swapping $B_{1}$ with $B_{4}$ and $B_{2}$ with $B_{3}$, i.e.
$$
\begin{pmatrix}    B_{1} &\vline& B_{2}\\ \hline B_{3} &\vline& B_{4} \end{pmatrix}^{\Pi}=\begin{pmatrix}
    B_{4} &\vline& B_{3}\\
    \hline
    B_{2} &\vline& B_{1}
\end{pmatrix}.
$$
Notice that if $B$ is a fake supermatrix of type I, then $B^\Pi$ is a fake supermatrix of type II and viceversa.
\begin{definition}
Let $B$ be an even supermatrix or a fake supermatrix of type-I:
\begin{equation} \label{superm1}
B= \begin{pmatrix}
    B_{1} &\vline& B_{2}\\
    \hline
    B_{3} &\vline& B_{4}
\end{pmatrix}.
\end{equation}
Assume that $B_4$ is invertible.
We define the \textit{Berezinian} of $B$ as:
$$
\Ber B:=\det B_{4}^{-1}\det(B_{1}-B_{2}B_{4}^{-1}B_{3})
$$
\end{definition}
Notice that, although this definition is the same as the first line of (\ref{bereq}), in Appendix \ref{super-app},
here we use it in a more general context, when $B$ can be a fake supermatrix of type-I.

\begin{definition}
Let $B$ be an even supermatrix or a fake supermatrix of type-II:
\begin{equation} \label{superm2}
B= \begin{pmatrix}
    B_{1} &\vline& B_{2}\\
    \hline
    B_{3} &\vline& B_{4}
\end{pmatrix}
\end{equation}
Assume that $B_1$ is invertible.
We define the $\textit{inverted berezinian}$ of $B$:
$$
\Ber^{*}B:= \Ber B^{\Pi}.
$$
It is clear that for an invertible supermatrix $B$:
\begin{equation}
\Ber^{*}B=\Ber B^{-1}. \label{ob}
\end{equation}
\end{definition}

\begin{remark}
For  an even supermatrix $B$,
we denote with $i$ the even indices and with $\hat{\imath}$ the odd ones, as we exemplify below:
$$
B=\begin{pmatrix}
    b_{11}& ... &b_{1p} &\vline& b_{1\hat{1}}& ... &b_{1\hat{q}}\\
    .& ... &. &\vline& .& ... &.\\
    .& ... &. &\vline& .& ... &.\\
    b_{r1}& ... &b_{rp} &\vline& b_{r\hat{1}}& ... &b_{r\hat{q}}\\
    \hline
    b_{\hat{1}1}& ... &b_{\hat{1}p} &\vline& b_{\hat{1}\hat{1}}& ... &b_{\hat{1}\hat{q}}\\
    .& ... &. &\vline& .& ... &.\\
    .& ... &. &\vline& .& ... &.\\
    b_{\hat{s}1}& ... &b_{\hat{s}r} &\vline& b_{\hat{s}\hat{1}}& ... &b_{\hat{s}\hat{q}}\\
\end{pmatrix}.$$
\end{remark}

\bigskip
\bigskip

We state now the super version of Cramer's rule. Recall that, in ordinary linear algebra, for an invertible matrix $T$ of size $n \times n$ and a column vector $\mathbf{b}$ of size $n$, the solution to the equation:
$$T\mathbf{x}=\mathbf{b}$$
is a column vector $\mathbf{x}$ whose entries are given by:
$$x_{i}=\dfrac{\det T_{i}(\mathbf{b})}{\det T}$$
where $T_{i}(\mathbf{b})$ denotes the matrix obtained by replacing the $i$-th column of $T$ with $\mathbf{b}$.

\medskip

In super linear algebra, we have the following theorem:

\begin{theorem}{\sl Super Cramer's rule}.\label{supercramer}
Let $M\in \mathrm{GL}(r|s)$ be an invertible even supermatrix and let $\mathbf{b}\in \mathbf{k}^{r|s}$ be an even vector of size $r|s$. Then, the solution to the equation
$$M\mathbf{x}=\mathbf{b}$$
is given by:
\begin{align*}
x_{i}=\dfrac{\Ber M_{i}(\mathbf{b})}{\Ber M}\qquad \hbox{for}\quad i=1,...,r
\end{align*}
\begin{align*}
x_{\hat{\jmath}}=\dfrac{\Ber^{*} M_{\hat{\jmath}}(\mathbf{b})}{\Ber^{*}M}\qquad \hbox{for}\quad\hat{\jmath}=\hat{1}\cdots \hat{s}
\end{align*}
where $M_{i}(\mathbf{b})$ is the supermatrix obtained by replacing $i$-th even column of $M$ with $\mathbf{b}$ and $M_{\hat{\jmath}}(\mathbf{b})$ is the fake supermatrix of type-II obtained by replacing $\hat{\jmath}$-th odd column of $M$ with $\mathbf{b}$.
\end{theorem}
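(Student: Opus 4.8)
The plan is to reduce the statement to the multiplicativity of the Berezinian, exactly as in the classical proof, supplemented by a direct block computation. Since $M\in\mathrm{GL}(r|s)$, the vector $\mathbf{x}=M^{-1}\mathbf{b}$ exists and is unique, and because both $M^{-1}$ and $\mathbf{b}$ are even, $\mathbf{x}$ is again even; write $\mathbf{x}=(x_1,\dots,x_r,x_{\hat 1},\dots,x_{\hat s})^{T}$, so that $x_1,\dots,x_r$ are even and $x_{\hat 1},\dots,x_{\hat s}$ are odd. For each even index $i$ let $E_i(\mathbf{x})$ be the identity supermatrix with its $i$-th (even) column replaced by $\mathbf{x}$; this is a genuine even supermatrix. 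For each odd index $\hat\jmath$ let $F_{\hat\jmath}(\mathbf{x})$ be the identity with its $\hat\jmath$-th (odd) column replaced by $\mathbf{x}$; since $\mathbf{x}$ is even this is a fake supermatrix of type-II. The key observation is that right multiplication realises the column replacement: applying $M$ to a standard basis column reproduces the corresponding column of $M$, while $M\mathbf{x}=\mathbf{b}$, so that $M\,E_i(\mathbf{x})=M_i(\mathbf{b})$ and $M\,F_{\hat\jmath}(\mathbf{x})=M_{\hat\jmath}(\mathbf{b})$.

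For the even indices I would compute $\Ber M_i(\mathbf{b})=\Ber\big(M E_i(\mathbf{x})\big)$ straight from the defining formula $\Ber B=\det B_4^{-1}\det(B_1-B_2B_4^{-1}B_3)$. In block form the lower-right block of $E_i(\mathbf{x})$ is $I_s$ and only its first block-column is non-trivial; carrying out the product $M E_i(\mathbf{x})$ and substituting, the cross terms cancel and one is left with $\Ber M_i(\mathbf{b})=\det(M_4)^{-1}\det\big[(M_1-M_2M_4^{-1}M_3)\,B_1\big]$, where $B_1$ is the ordinary $r\times r$ matrix obtained from $I_r$ by replacing its $i$-th column with $(x_1,\dots,x_r)^{T}$. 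Since $B_1$ has purely even entries, the ordinary determinant is multiplicative and $\det B_1=x_i$ by cofactor expansion, whence $\Ber M_i(\mathbf{b})=x_i\,\Ber M$, which is the first formula.

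For the odd indices the same idea works after passing to the inverted Berezinian. Using $\Ber^{*}B=\Ber B^{\Pi}$ and the fact that parity reversal is the conjugation $B^{\Pi}=PBP^{-1}$ by the block permutation $P$, hence multiplicative at the level of matrices, I would rewrite $\Ber^{*}M_{\hat\jmath}(\mathbf{b})=\Ber\big(M^{\Pi}F_{\hat\jmath}(\mathbf{x})^{\Pi}\big)$, where $M^{\Pi}$ is again a genuine even supermatrix (now of size $(s|r)\times(s|r)$) and $F_{\hat\jmath}(\mathbf{x})^{\Pi}$ is a fake supermatrix of type-I whose lower-right block is the identity. The analogous block computation leaves $\Ber^{*}M_{\hat\jmath}(\mathbf{b})=\det(M_1)^{-1}\det\big[(M_4-M_3M_1^{-1}M_2)\,B_4\big]$, where $B_4$ is $I_s$ with its $\hat\jmath$-th column replaced by $(x_{\hat 1},\dots,x_{\hat s})^{T}$. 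Expanding this determinant along the single replaced column and using the cofactor identity for the even matrix $N:=M_4-M_3M_1^{-1}M_2$ gives $\det(N B_4)=x_{\hat\jmath}\det N$, so $\Ber^{*}M_{\hat\jmath}(\mathbf{b})=x_{\hat\jmath}\,\Ber^{*}M$, the second formula.

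The step I expect to be delicate, and would write out with care, is the odd case: here the block $B_4$ carries odd entries, so I must justify that the ordinary determinant manipulations — Laplace expansion along the $\hat\jmath$-th column, column multilinearity, and the cofactor orthogonality $\sum_l N_{lk}C_l=\delta_{k\hat\jmath}\det N$ — remain valid over the supercommutative base $A$. The saving feature is that only a single odd column is involved, so each monomial in the expansion contains at most one odd factor; no product of two anticommuting odd elements ever occurs, the even cofactors $C_l$ are central, and the odd scalar $x_{\hat\jmath}$ can be factored out unambiguously (and commutes with the even quantity $\Ber^{*}M$). Throughout one also uses that $M_4$, respectively $M_1$, is invertible, which is guaranteed by $M\in\mathrm{GL}(r|s)$ and is precisely what makes both $\Ber M$ and $\Ber^{*}M$ well defined.
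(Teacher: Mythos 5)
Your proof is correct, and it takes a genuinely different route from the paper's. The paper proves the theorem by a Gauss-type factorization: it writes $M=M_{+}M_{0}M_{-}$, verifies the formulas directly for products $C=M_{0}M_{-}$ (block diagonal times lower unipotent) using the ordinary Cramer rule and multilinearity, and then runs an induction, multiplying on the left by elementary upper-unipotent factors $E$ with a single nonzero odd entry, with a substantial block computation at each step. You instead lift the classical one-line proof of Cramer's rule to the super setting: the column-replacement identities $M\,E_{i}(\mathbf{x})=M_{i}(\mathbf{b})$ and $M\,F_{\hat\jmath}(\mathbf{x})=M_{\hat\jmath}(\mathbf{b})$, followed by a computation of the Berezinian of each product straight from the defining block formula. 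A key merit of your version is that the cross-term cancellation you exhibit replaces any appeal to multiplicativity of the Berezinian, which would otherwise be a genuine obstruction: $E_{i}(\mathbf{x})$ need not be invertible (its Berezinian is $x_{i}$, which can fail to be a unit) and $F_{\hat\jmath}(\mathbf{x})$ is only a fake supermatrix, so the product rule $\Ber(AB)=\Ber A\,\Ber B$ applies to neither factor; your direct computation, which uses only the invertibility of $M_{4}$ (resp.\ $M_{1}$), guaranteed for $M\in\mathrm{GL}(r|s)$, is exactly what closes this gap. You also correctly isolate the only delicate super-algebraic point, namely that the manipulations behind $\det(NB_{4})=x_{\hat\jmath}\det N$ involve a single odd column, so each monomial in the Laplace expansion contains at most one odd factor and the commutative determinant identities survive over the supercommutative base. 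What the paper's longer route buys is indifference to the parity of $\mathbf{b}$: as noted in Remark \ref{cramerfake}, its proof never uses that $\mathbf{b}$ is even and so covers odd $\mathbf{b}$ verbatim, whereas your argument fixes the parities of the blocks $B_{1}$ and $B_{4}$ of $E_{i}(\mathbf{x})$ and $F_{\hat\jmath}(\mathbf{x})$ using that $\mathbf{x}$ is even; for odd $\mathbf{b}$ you would have to rerun it with all parities reversed (this works, but is an additional check you should state).
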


This is a known result, see \cite{Bergvelt}. However, for completeness, we have included in Appendix \ref{cramer-app} a proof of it, obtained with different, elementary methods.
\hyphenation{me-thods}

As stated in Remark \ref{cramerfake}, the same expressions hold for the equation $M\mathbf{x}=\mathbf{b}$ if $\mathbf{b}$ is an odd vector of size $r|s$.

\begin{example}
Let $g=\begin{pmatrix}
    a &\vline& \alpha\\
    \hline
    \beta &\vline& b
\end{pmatrix} \in \mathrm{GL}(1|1)$. Suppose we need to find $\begin{pmatrix}
    x\\
    \hline
    \eta
\end{pmatrix}$ such that:
$$
g\begin{pmatrix}
    x\\
    \hline
    \eta
\end{pmatrix}= \begin{pmatrix}
    c\\
    \hline
    \gamma
\end{pmatrix}.$$
Then, using super Cramer's rule, one gets:
$$x= \dfrac{\Ber \begin{pmatrix}
    c &\vline& \alpha\\
    \hline
    \gamma &\vline& b
\end{pmatrix}}{\Ber \begin{pmatrix}
    a &\vline& \alpha\\
    \hline
    \beta &\vline& b
\end{pmatrix}}= \dfrac{c-\alpha b^{-1}\gamma}{a-\alpha b^{-1} \beta}$$
and
$$\eta=\dfrac{\Ber^{*} \begin{pmatrix}
    a &\vline& c\\
    \hline
    \beta &\vline& \gamma
\end{pmatrix}}{\Ber^{*} \begin{pmatrix}
    a &\vline& \alpha\\
    \hline
    \beta &\vline& b
\end{pmatrix}}= \dfrac{\gamma-\beta a^{-1}c}{b-\beta a^{-1} \alpha}.$$
\end{example}

\section{First Fundamental Theorem of invariant \\theory for $\mathrm{SL(r|s)}$}\hyphenation{theo-ry}
\label{fft-sec}
\hyphenation{theo-ry}
In ordinary geometry, we have a natural right action of the complex special linear group $\rSL_r(\C)$
on the algebra of functions $\C[\rM_{r \times p}]$ of the $r \times p$ complex matrices $\rM_{r \times p}$:

\begin{equation}\label{eq-action}\begin{CD}
\C[\rM_{r \times p}]\times\rSL_r(\C)@>>>\C[\rM_{r \times p}]\\
(f,g)@>>>f.g,\end{CD}\end{equation}
where $(f.g)(M):=f(gM)$, for all $M \in \rM_{r\times p}(\C)$.

The First Fundamental Theorem of ordinary invariant theory realizes the ring of
invariants with respect to this action,
as the subring of $\C[\rM_{r \times p}]$ generated by the determinants
of the $r \times r$ minors in $\rM_{r\times p}(\C)$.

We now extend the action (\ref{eq-action}) to the super setting and compute
the superalgebra of invariants. Let $\mathrm{M}_{r|s \times p|q}$ be the super vector
space of supermatrices and let $\cO$ be the superalgebra of  `functions' on
$\mathrm{M}_{r|s \times p|q}$ (see Observation \ref{fopt-yo}).
For notational purposes, let us organize the generators  of $\cO$  (or `coordinates') in a supermatrix, as follows:

\begin{equation}A=\label{blocks}\begin{pmatrix}
x_{11} & ... & x_{1p} &\vline& \alpha_{1\hat{1}} &... & \alpha_{1\hat{q}}\\
. & ... & . &\vline& . &... & .\\
. & ... & . &\vline& . &... & .\\
x_{r1} & ... & x_{rp} &\vline& \alpha_{r\hat{1}} &... & \alpha_{r\hat{q}}\\
\hline
\beta_{\hat{1}1} & ... & \beta_{\hat{1}p} &\vline & y_{\hat{1}\hat{1}} & ...& y_{\hat{1}\hat{q}}\\
. & ... & . &\vline& . &... & .\\
. & ... & . &\vline& . &... & .\\
\beta_{\hat{s}1} & ... & \beta_{\hat{s}p} &\vline & y_{\hat{s}\hat{1}} & ...& y_{\hat{s}\hat{q}}
\end{pmatrix},
\end{equation}
where the hat denotes the odd indices.
Then,
$$\mathcal{O}= k[x_{ij},y_{\hat{k}\hat{l}},\alpha_{i\hat{k}},\beta_{\hat{l}j}].$$ Let $\mathcal{A}$ be a commutative superalgebra. For every supermatrix in the set of $\mathcal{A}$-points of $\mathrm{M}_{r|s \times p|q}$ (see Appendix \ref{super-app}), $M \in \mathrm{M}_{r|s \times p|q}(\mathcal{A})$ and
$f \in \cO$ one can compute an element of $\mathcal{A}$. This is denoted, in the terminology of the $A$-points, as $f(M)$ instead of the, perhaps more appropriate,
$M(f)$.

Let $\rSL(r|s)$ be the special linear
supergroup (see, for example, \cite{ccf}, Chapter 1). Its $\mathcal{A}$-points are the matrices in $\rGL(r|s)(\mathcal{A})$
with Berezinian equal to 1.
In terms of $\mathcal{A}$-points, one can write a left action of the special linear supergroup on the supermatrices:
$$\begin{CD}\rSL(r|s)(\mathcal{A})\times\mathrm{M}_{r|s \times p|q}(\mathcal{A})@>>>\mathrm{M}_{r|s \times p|q}(\mathcal{A})\\
(g, M)@>>>gM.\end{CD}
$$
One can also define a right action of $\rSL(r|s)$ on $\cO$, exactly as in the ordinary case (\ref{eq-action}). For $g \in \rSL(r|s)(\mathcal{A})$ and $f \in \cO$ one has
$$
(f.g)(M):=f(gM), \quad M \in \rM_{r|s\times p|q}(\mathcal{A}).
$$

\medskip

We want to be able to define the Berezinians of all $r|s \times r|s$ minors appearing in the matrix (\ref{blocks}). We then have to localize  $\cO$ at the determinants
\begin{equation*}
 D_{i_{1},..,i_{r}}:=\det \begin{pmatrix}
     x_{1i_{1}} &...& x_{1i_{r}}\\
     . &...&.\\
     . &...&.\\
     x_{ri_{1}} &...& x_{ri_{r}}\\
 \end{pmatrix},\qquad
 D_{\hat{\jmath}_{1}\cdots \hat{\jmath}_{s}}:=
 \det \begin{pmatrix}
     y_{\hat{1}\hat{\jmath}_{1}} &\cdots& y_{\hat{1}\hat{\jmath}_{s}}\\
     \cdot &\cdots&\cdot\\
     \cdot &\cdots&\cdot\\
     y_{\hat{s}\hat{\jmath}_{1}} &\cdots& y_{\hat{s}\hat{\jmath}_{s}}\\
 \end{pmatrix},
\end{equation*}
in the diagonal blocks, in the expression (\ref{blocks}).
We can now look at the action of the supergroup $\rSL(r|s)$ on the localization
\begin{equation}\label{otilde}
\Tilde{\mathcal{O}}:=\mathcal{O}[D_{i_{1}\cdots i_{r}}^{-1}, D_{\hat{\jmath}_{1}\cdots \hat{\jmath_{s}}}^{-1}].
\end{equation}
This corresponds to take the subset $\Tilde \rM_{r|s\times p|q} (\cA)\subset\rM_{r|s\times p|q}(\mathcal{A})$ where all the corresponding minors are invertible.

In this way we can formulate the super version of the First Fundamental Theorem of
invariant theory. As we shall see in the next section, it is related with
the Grassmannian and its super Pl\"ucker embedding, and we adopted a very similar notation for generators as in \cite{Voronov}.

\begin{theorem}\label{SFFT}
Let $A_{i_{1}\cdots i_{r}|\hat{\jmath}_{1}\cdots \hat{\jmath}_{s}}$ be the supermatrix formed with the columns $(i_{1},\dots, i_{r}|\hat{\jmath}_{1},\dots, \hat{\jmath}_{s})$ of the supermatrix $A$ in (\ref{blocks}). Then, the ring of invariants $\Tilde{\mathcal{O}}^{\mathrm{SL}(r|s)}$
is generated by the { super minors}
\begin{align}
&X_{i_{1}\cdots i_{r}|\hat{\jmath}_1\cdots \hat{\jmath}_{s}}:=\Ber A_{i_{1}\cdots i_{r}|\hat{\jmath}_{1}\cdots \hat{\jmath}_{s}} \nonumber\\ &X^{*}_{i_{1}\cdots i_{r}|\hat{\jmath}_{1}\cdots \hat{\jmath}_{s}}:=\Ber^{*}A_{i_{1}\cdots i_{r}|\hat{\jmath}_{1}\cdots \hat{\jmath}_{s}}\label{superminor1}
\end{align}
and the {`fake' super minors},
\begin{align}\label{superminor2}
X_{i_{1}\cdots i_{k-1}\hat{\jmath}_{k} i_{k+1}\cdots i_{r}|\hat{\jmath}_{1}\cdots \hat{\jmath}_{s}}&:=
\Ber A_{i_{1}\cdots i_{k-1}\hat{\jmath}_{k} i_{k+1}\cdots i_{r}|\hat{\jmath}_{1}\cdots \hat{\jmath}_{s}}
\\\label{superminor3}
X^{*}_{i_{1}\cdots i_{r}|\hat{\jmath}_{1}\cdots \hat{\jmath}_{l-1}i_{l}\hat{\jmath}_{l+1}\cdots\hat{\jmath}_{s}}&:= \Ber^{*}A_{i_{1}\cdots i_{r}|\hat{\jmath}_{1}\cdots \hat{\jmath}_{l-1}i_{l}\hat{\jmath}_{l+1}\cdots \hat{\jmath}_{s}}
\end{align}
where $ 1\leq i_{1} \leq \cdots\leq i_{r} \leq p $ and $1\leq \hat{\jmath}_{1} \leq \cdots\leq \hat{\jmath}_{s} \leq q$.
\end{theorem}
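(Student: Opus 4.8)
The plan is to prove the two inclusions separately, the easy one being that each listed super minor is $\mathrm{SL}(r|s)$-invariant. For a genuine square submatrix $A_I$ of size $r|s\times r|s$, invariance is immediate from the multiplicativity of the Berezinian: writing $X_I(gM)=\Ber\!\big(g\,(A_I)\big)=\Ber(g)\,\Ber(A_I)=X_I(M)$, since $\Ber(g)=1$ for $g\in\mathrm{SL}(r|s)$; the inverted version $X^{*}_I$ is handled identically, using $\Ber^{*}(gB)=\Ber(B^{-1}g^{-1})=\Ber^{*}(B)\,\Ber(g)^{-1}=\Ber^{*}(B)$ for invertible $B$, cf. (\ref{ob}). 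For the \emph{fake} super minors of (\ref{superminor2})--(\ref{superminor3}) one checks the same identity $\Ber(gB)=\Ber(g)\,\Ber(B)$ when $B$ is a fake supermatrix, which is a direct computation from the block formula (equivalently, a consequence of the super Jacobi identity, Theorem \ref{superjacobi}). This is the only slightly delicate point of the easy direction, since a fake $B$ is not itself an element of $\mathrm{GL}(r|s)$.

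For the reverse inclusion, fix the base block $A_0:=A_{1\cdots r|\hat{1}\cdots\hat{s}}$, which is invertible on $\tilde{\mathrm{M}}$ because its diagonal blocks are (their determinants $D_{1\cdots r},D_{\hat{1}\cdots\hat{s}}$ are inverted in $\tilde{\mathcal{O}}$). Set $X_0:=\Ber A_0$ and note $X_0^{*}=X_0^{-1}$ by (\ref{ob}), so the algebra $R$ generated by all super minors contains $X_0^{\pm 1}$. Now $\mathrm{SL}(r|s)$ sits inside $\mathrm{GL}(r|s)$ with quotient $\mathbb{G}_m$ realized by the Berezinian, and the one-parameter subgroup $z(\lambda):=\mathrm{diag}(\lambda,1,\dots,1)$, of Berezinian $\lambda$, splits this quotient. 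Hence $\mathbb{G}_m$ acts on $\tilde{\mathcal{O}}^{\mathrm{SL}(r|s)}$ and, being a torus, decomposes it into weight spaces $V_n$. Writing any $g\in\mathrm{GL}(r|s)$ as $g=z(\Ber g)\,s$ with $s\in\mathrm{SL}(r|s)$, a one-line computation shows that $f\in V_n$ is precisely a $\mathrm{GL}(r|s)$-semi-invariant of weight $\Ber^{n}$, i.e. $f.g=(\Ber g)^{n}f$. Since $X_0$ has weight $1$, the element $\bar f:=f\,X_0^{-n}$ is genuinely $\mathrm{GL}(r|s)$-invariant.

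It remains to identify $\tilde{\mathcal{O}}^{\mathrm{GL}(r|s)}$. Here I would use the normalization $\Phi\colon M\mapsto A_0^{-1}M$, which is $\mathrm{GL}(r|s)$-invariant (as $A_0\mapsto gA_0$) and whose image is the affine slice of supermatrices with first block equal to the identity. The slice is a fundamental domain: if $N$ and $gN$ both lie in it then $g=(gN)_0=I$. Thus $\Phi$ and the inclusion of the slice induce mutually inverse maps identifying $\tilde{\mathcal{O}}^{\mathrm{GL}(r|s)}$ with the coordinate ring of the slice, a free polynomial superalgebra on the entries of $A_0^{-1}A$ outside the identity block. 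By super Cramer's rule (Theorem \ref{supercramer}, together with the Remark after it for odd right-hand sides), each such entry is a ratio of super minors: replacing an even or odd column of $A_0$ by another even or odd column of $A$ produces, in the four possible parity combinations, exactly the genuine super minors $X,X^{*}$ and the two families of fake super minors, divided by $X_0$ or $X_0^{*}$. This matching is the step I expect to be the main obstacle: pairing the four families of generators with the four parity types arising in super Cramer's rule, and checking in the localized functor-of-points setting that the slice is genuinely a fundamental domain.

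Combining these, $\bar f$ is a polynomial in the ratios, and every ratio equals a super minor times $X_0^{*}=X_0^{-1}$ or times $X_0$, hence lies in $R$; since $X_0\in R$ as well, we get $f=X_0^{n}\bar f\in R$. Therefore $\tilde{\mathcal{O}}^{\mathrm{SL}(r|s)}\subseteq R$, which with the easy inclusion $R\subseteq\tilde{\mathcal{O}}^{\mathrm{SL}(r|s)}$ yields the claimed equality. I would emphasize that the two genuinely super features of the argument, namely the appearance of fake supermatrices and the use of $\Ber^{*}$ alongside $\Ber$, are both forced by super Cramer's rule, which computes the even and odd coordinates of a solution through $\Ber$ and $\Ber^{*}$ respectively.
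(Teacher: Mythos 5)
Your proposal is correct in substance, but it reaches the hard inclusion by a genuinely different route than the paper. The paper avoids your entire $\mathrm{GL}(r|s)/\mathbb{G}_m$ detour with a single trick: instead of normalizing by $A_0=A_{1\cdots r|\hat{1}\cdots\hat{s}}$, it normalizes by the matrix $\Tilde{A}_{1\cdots r|\hat{1}\cdots\hat{s}}$ obtained from $A_0$ by dividing its \emph{first column} by $X_{1\cdots r|\hat{1}\cdots\hat{s}}$. Since $\Ber$ is linear in the first even column, this matrix has Berezinian $1$, i.e.\ it is an $\mathrm{SL}(r|s)$-point; hence any $T\in\Tilde{M}_{r|s\times p|q}(\cA)$ factors as $T=\Tilde{T}_{1\cdots r|\hat{1}\cdots\hat{s}}R$ with the first factor already in $\mathrm{SL}(r|s)(\cA)$, and $\mathrm{SL}$-invariance alone gives $f(T)=f(R)$, where the entries of $R$ are (by super Cramer's rule, Theorem \ref{supercramer}) precisely the superminors and fake superminors — the same computation as in your slice. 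Your route — splitting $\Ber\colon\mathrm{GL}(r|s)\to\mathbb{G}_m$ by $z(\lambda)$, decomposing $\Tilde{\mathcal{O}}^{\mathrm{SL}(r|s)}$ into weight spaces, recognizing weight-$n$ vectors as semi-invariants $f.g=(\Ber g)^n f$, and twisting by $X_0^{-n}$ — does work: normality of $\mathrm{SL}$ in $\mathrm{GL}$ makes the torus act on the invariants, and the action is rational because $z(\lambda)$ simply rescales the first row of generators, so the weight decomposition exists. It also buys more than the paper's argument: it identifies $\Tilde{\mathcal{O}}^{\mathrm{GL}(r|s)}$ (the weight-zero part, generated by the ratios) and exhibits $\Tilde{\mathcal{O}}^{\mathrm{SL}(r|s)}$ as $\bigoplus_n X_0^n\,\Tilde{\mathcal{O}}^{\mathrm{GL}(r|s)}$. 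The cost is the extra machinery (rationality of the torus action, the fundamental-domain argument), all of which the paper's one-line rescaling makes unnecessary.

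Two smaller points. First, you are right to flag multiplicativity of $\Ber$ against fake supermatrices as the delicate part of the easy inclusion; the paper asserts it without comment. It does hold: $\Ber$ is linear in the entries of the fake column, and so is $B\mapsto\Ber(gB)$, so the identity $\Ber(gB)=\Ber(g)\Ber(B)$, valid when that column carries formal even entries, persists upon substituting odd entries. Second, the coordinate ring of your slice is not a \emph{free} polynomial superalgebra but a localization of one (the slice is the open locus where the minors $D_I,D_{\hat{J}}$ are invertible); consequently a $\mathrm{GL}$-invariant is a polynomial in the entries of $A_0^{-1}A$ \emph{and} in the inverses of $D_I(A_0^{-1}A)$, $D_{\hat{J}}(A_0^{-1}A)$, and one still has to check that these inverses lie in the subalgebra $R$ generated by the (fake) superminors. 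They do: such a minor equals a unit of $R$ (a product of genuine superminors, e.g.\ with body $D_I/D_{1\cdots r}$) plus a nilpotent element of $R$, so its inverse is again in $R$. This step deserves a sentence, but in fairness the paper's final step ("therefore $f$ is a function of superminors and fake superminors") leaves exactly the same point implicit.
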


\begin{proof}
Firstly, it is clear, from the multiplicative property of Berezinian, that super-minors and fake super-minors are invariants.

To show  that they generate the ring $\Tilde{\mathcal{O}}^{\rSL(r|s)}$,  we decompose the matrix of coordinate functions (\ref{blocks}) as $A=\Tilde{A}_{1\cdots r|\hat{1}\cdots \hat{s}}B$, where

$$
\Tilde{A}_{1\cdots r|\hat{1}\cdots \hat{s}}= \begin{pmatrix}
\dfrac{x_{11}}{X_{1\cdots r|\hat{1}\cdots \hat{s}}} &\cdots  & x_{1r} &\vline& \alpha_{1\hat{1}} &\cdots  & \alpha_{1\hat{s}}\\
\cdot & \cdots  &\cdot &\vline& \cdot &\cdots  & \cdot\\
\cdot & \cdots  & \cdot &\vline& \cdot &\cdots  & \cdot\\
\dfrac{x_{r1}}{X_{1\cdots r|\hat{1}\cdots \hat{s}}} &\cdots  & x_{rr} &\vline& \alpha_{r\hat{1}} &\cdots  & \alpha_{r\hat{s}}\\
&&&\vline&&&\\
\hline
&&&\vline&&&\\
\dfrac{\beta_{\hat{1}1}}{X_{1\cdots r|\hat{1}\cdots \hat{s}}} &\cdots & \beta_{\hat{1}r} &\vline & y_{\hat{1}\hat{1}} &\cdots& y_{\hat{1}\hat{s}}\\
\cdot & \cdots & \cdot &\vline& \cdot &\cdots & \cdot\\
\cdot & \cdots& \cdot &\vline& \cdot &\cdots & \cdot\\
\dfrac{\beta_{\hat{s}1}}{X_{1\cdots r|\hat{1}\cdots \hat{s}}} &\cdots & \beta_{\hat{s}r} &\vline & y_{\hat{s}\hat{1}} & \cdots& y_{\hat{s}\hat{s}}
\end{pmatrix}.
 $$
Therefore, we have,
$$
B=\Tilde{A}_{1\cdots r|\hat{1}\cdots \hat{s}}^{-1}A.
$$
{Notice that, we are working over $\mathcal{O}[D_{i_{1}\cdots i_{r}}^{-1}, D_{\hat{\jmath}_{1}\cdots \hat{\jmath_{s}}}^{-1}]$, hence, $\Tilde{A}_{1\cdots r|\hat{1}\cdots \hat{s}}^{-1}$ is well-defined.}
Using super Cramer's rule we can find the entries of $B=\begin{pmatrix}
        U &\vline& V\\
        \hline
        W &\vline& Z
    \end{pmatrix}$:

 \begin{align}
     &U_{ij}=
     \begin{cases}
   X_{j\cdots r|\hat{1}\cdots \hat{s}} &\hbox{for } i=1, 1\leq j \leq p,\\
     X^{*}_{1\cdots r|\hat{1}\cdots \hat{s}}X_{1\cdots (i-1)j(i+1)\cdots r|\hat{1}\cdots \hat{s}} &\hbox{for } 1< i\leq r,\, 1\leq j \leq p
     \end{cases}
 \\[0.3cm]
&W_{\hat{k}j}=
X_{1\cdots r|\hat{1}\cdots \hat{s}}X^{*}_{1\cdots r|\hat{1}\cdots (\hat{k}-\hat{1}) j(\hat{k}+\hat{1})\cdots \hat{s}} \qquad  \hbox{for } 1\leq k\leq s,\, 1\leq j \leq p.
\\[0.3cm]
  &  V_{i\hat{l}}=\begin{cases}
     X_{\hat{l}\cdots r|\hat{1}\cdots \hat{s}} &\hbox{for }i=1,\,1\leq l \leq q,\\
     X^{*}_{1\cdots r|\hat{1}\cdots \hat{s}}X_{1\cdots (i-1)\hat{l}(i+1)\cdots r|\hat{1}\cdots \hat{s}} &\hbox{for } 1< i\leq r,\,1\leq l \leq q.
     \end{cases}
\\[0.3cm]
&Z_{\hat{k}\hat{l}}=X_{1\cdots r|\hat{1}\cdots \hat{s}}X^{*}_{1\cdots r|\hat{1}\cdots( \hat{k}-\hat{1})\hat{l}(\hat{k}+\hat{1})\cdots \hat{s}} \qquad \hbox{for } 1\leq k\leq s,\, 1\leq l \leq q.
\end{align}
Let  $\cA$ be any superalgebra and consider an arbitrary element $T\in \Tilde M_{r|s\times p|q}(\cA)$. We can write $T$ as a product,
\begin{eqnarray}\label{R}
T=\Tilde T_{1\cdots  r|\hat{1}\cdots  \hat{s}}R
\end{eqnarray}
where,
$\Tilde T_{1\cdots  r|\hat{1}\cdots \hat{s}}$ is an element of $\rSL(r|s)(\cA)$ and $R\in \Tilde M_{r|s\times p|q}(\cA)$ is computed in terms of superminors and  fake superminors. Explicitly, each entry of $R$ can be computed by evaluating on $T$ the corresponding superminor or fake superminor as in $B$ above.

{Let $f\in \Tilde\cO^{\rSL(r|s)}$ be an invariant. Then, by definition, we have:
\begin{eqnarray*}
f.g(R)=f(gR)=f(R), \qquad \text{for all }R\in \Tilde M_{r|s\times p|q}(\cA) \text{ and }g \in \rSL(r|s)(\cA).
\end{eqnarray*}
But then, for any $T \in \Tilde M_{r|s\times p|q}(\cA)$,
$$f(T)=f(\Tilde T_{1\cdots r|\hat{1}\cdots \hat{s}}R)=f(R),$$
where $R$ is as in (\ref{R}). Therefore, $f$ is a function of superminors and fake superminors, as we wanted to prove.}
\end{proof}

\section{The super Jacobi identity}\label{super-jac}
In this section we want to obtain a super Jacobi identity, a generalization of  the  Jacobi identity, Theorem \ref{J2} in the Appendix \ref{jacobi-app},  to the super setting. We first introduce some notation.
\hyphenation{ge-ne-ra-li-za-tion}
\medskip

{\bf Notation.} Let $A$ be an invertible supermatrix of size $p|q \times p|q$. Let $u=(p-r+1,\dots  ,p|{\hat{q}-\hat{s}+\hat{1}},\dots  ,\hat{q})$ and $v=(1,\dots ,r|\hat{1},\dots  ,\hat{s})$ be two sets of indices, $r<p$, $\hat{s}<\hat{q}$.
Let $A^{u}_{v}$ denote the matrix obtained from $A$ by deleting the rows and columns
contained in $u$ and $v$, respectively, $i.e.$ by deleting the last $r$ even rows, the last $\hat{s}$ odd rows, first $r$ even columns and first $\hat{s}$ odd columns. Also, set $\Tilde{u}=(1,\dots  ,p-r|\hat{1},\dots  ,{\hat{q}-\hat{s}})$ and $\Tilde{v}=(r+1,\dots  ,p|\hat{s}+\hat{1},\dots  ,\hat{q})$ as the complements of $u$ and $v$, respectively.

\begin{theorem}{\sl The super Jacobi Identity}.\label{superjacobi}
Let the notation be as above.
If $A^{u}_{v}$ and $(A^{-1})^{\Tilde{v}}_{\Tilde{u}}$ are invertible, then
\begin{eqnarray} \label{SJacobi}
    \Ber A \,\Ber(A^{-1})^{\Tilde{v}}_{\Tilde{u}}= (-1)^{t} \Ber A^{u}_{v}
\end{eqnarray}
where $t=r(p+1)+s(q+1)$.
\end{theorem}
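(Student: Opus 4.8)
The plan is to mimic the classical proof of the Jacobi identity (Theorem \ref{J2}) but at the level of Berezinians, exploiting that the Berezinian is multiplicative and that the super version of Cramer's rule (Theorem \ref{supercramer}) tells us exactly how the entries of $A^{-1}$ are expressed via Berezinians of sub-supermatrices. The classical Jacobi identity relates the determinant of a complementary minor of $A^{-1}$ to a minor of $A$ divided by $\det A$; here I expect the Berezinian to play both roles, and the sign $(-1)^{t}$ with $t=r(p+1)+s(q+1)$ to arise from the permutations needed to bring rows and columns into the block position demanded by the definition of $\Ber$. Concretely, I would first set up a block decomposition of $A$ adapted to the index sets $u,v$ and their complements $\tilde u,\tilde v$, writing $A$ in a $2\times 2$ super-block form in which $A^{u}_{v}$ sits as one of the corners.

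The \textbf{core computation} proceeds as in the ordinary case: consider the block matrix product that realizes $A^{-1}$ and isolate the sub-supermatrix $(A^{-1})^{\tilde v}_{\tilde u}$. I would introduce the elementary ``super'' block-triangular factorization $A = L \, D \, U$ (or equivalently write $A$ times a suitable projection), so that taking Berezinians and using multiplicativity, $\Ber A = \Ber(\text{diagonal blocks})$, turns the identity into a statement purely about the Schur-complement blocks. The key algebraic input is that the sub-supermatrix of $A^{-1}$ indexed by $(\tilde v,\tilde u)$ equals, up to a block-inversion, the Schur complement governing $\Ber A^{u}_{v}$; this is the super analogue of the classical fact that the complementary minor of the inverse is the cofactor divided by the determinant. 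Rearranging gives
$$
\Ber A \,\Ber (A^{-1})^{\tilde v}_{\tilde u} = \pm \,\Ber A^{u}_{v},
$$
and the only remaining task is to pin down the sign.

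The \textbf{main obstacle} will be precisely this sign bookkeeping. Two sources contribute: first, moving the deleted even rows (the last $r$) and odd rows (the last $\hat s$) past the retained ones, and likewise for the columns, each producing a Koszul/permutation sign; second, the parity interplay inside the Berezinian definition itself, since $\Ber B = \det B_4^{-1}\det(B_1 - B_2 B_4^{-1} B_3)$ treats even and odd blocks asymmetrically, so that transpositions involving odd indices may carry signs differently from the purely even ones. I would track these carefully by reducing to the two extreme cases $s=0$ (purely even, where the sign must reduce to the classical $(-1)^{r(p+1)}$ governing the ordinary Jacobi identity) and $r=0$ (purely odd), then argue multiplicativity of the sign across the even and odd sectors to recover the stated $t=r(p+1)+s(q+1)$. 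I expect the hypotheses that $A^{u}_{v}$ and $(A^{-1})^{\tilde v}_{\tilde u}$ be invertible to be exactly what is needed so that all the relevant Berezinians and Schur complements are defined, making the factorization step legitimate.
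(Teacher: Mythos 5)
Your plan is workable, but it is not the paper's argument --- nor is it really the argument of Theorem \ref{J2}, which you say you are mimicking. Both the classical proof in Appendix \ref{jacobi-app} and the paper's proof of Theorem \ref{superjacobi} use a padding trick, not Schur complements: one forms the full-size supermatrix $T$ whose first $r$ even (resp.\ first $\hat{s}$ odd) columns are the last $r$ even (resp.\ last $\hat{s}$ odd) columns of $A^{-1}$, and whose remaining columns are the standard basis vectors $e_{j}$, $e_{\hat{l}}$. Then $\Ber (A^{-1})^{\Tilde{v}}_{\Tilde{u}}=\Ber T$, multiplicativity gives $\Ber A\,\Ber T=\Ber(AT)$, and $AT$ is simply $A$ with the columns indexed by $v$ replaced by the basis vectors $e_{p-r+j}$, $e_{\hat{q}-\hat{s}+\hat{l}}$, whose Berezinian is computed directly to be $(-1)^{t}\Ber A^{u}_{v}$. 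Everything happens inside standard-format supermatrices, no rows or columns are ever permuted, and super Cramer's rule is never invoked. Your route --- identify $(A^{-1})^{\Tilde{v}}_{\Tilde{u}}$ as the inverse of the Schur complement of the block $A^{u}_{v}$ via the block-inverse formula, then take Berezinians --- is a genuine alternative, and it is conceptually attractive: it makes transparent why the invertibility of $A^{u}_{v}$ and $(A^{-1})^{\Tilde{v}}_{\Tilde{u}}$ is exactly the right hypothesis. Its cost is infrastructure: the decomposition adapted to $(u,v)$ interleaves even and odd indices and places $A^{u}_{v}$ in an anti-diagonal position, so before you may use the factorization of $\Ber$ for block-triangular matrices you must reorder by permutations and account for their Berezinians; all of the sign lives there.

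The one step I cannot accept as written is your sign determination. Verifying the extreme cases $s=0$ and $r=0$ and then ``arguing multiplicativity of the sign across the even and odd sectors'' is not a proof: both extreme cases are blind to possible cross terms such as $(-1)^{rs}$ or $(-1)^{rq+sp}$, so they cannot exclude them, and the claimed multiplicativity is precisely what needs to be established. The fix is concrete. The only permutation you need swaps the column block indexed by $v$ past the block indexed by $\Tilde{v}$, and it is parity-preserving, i.e.\ a direct sum of a permutation $\sigma_{e}$ of the $p$ even columns and a permutation $\sigma_{o}$ of the $q$ odd columns; the Berezinian of such a permutation supermatrix is $\det(\sigma_{e})\det(\sigma_{o})^{-1}=\mathrm{sgn}(\sigma_{e})\,\mathrm{sgn}(\sigma_{o})$, which factors with no cross term. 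This yields the total sign $(-1)^{r(p-r)}(-1)^{s(q-s)}$, and this equals $(-1)^{r(p+1)+s(q+1)}=(-1)^{t}$ because $r(p-r)-r(p+1)=-r(r+1)$ and $s(q-s)-s(q+1)=-s(s+1)$ are always even. With that replacement (and dropping the unnecessary appeal to super Cramer's rule), your argument closes and gives a correct, genuinely different proof.
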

We call (\ref{SJacobi}) the \textit{super Jacobi identity}.

\begin{proof}
 A similar strategy to what we used in the proof of Theorem \ref{J2} (see Appendix \ref{jacobi-app}) will work here. Let $$A=\begin{pmatrix}
     [a_{ij}] &\vline& [a_{i\hat{k}}]\\
     \hline
     [a_{\hat{l}j}] &\vline& [a_{\hat{l}\hat{k}}]
 \end{pmatrix}\qquad\hbox{and}\qquad A^{-1}=\begin{pmatrix}
     [b_{ij}] &\vline& [b_{i\hat{k}}]\\
     \hline
     [b_{\hat{l}j}] &\vline& [b_{\hat{l}\hat{k}}]
 \end{pmatrix}.$$ According to our notation,
 \begin{equation*}
     (A^{-1})_{\Tilde{u}}^{\Tilde{v}}= \begin{pmatrix}
     b_{1(p-r+1)}&\cdots&b_{1p} &\vline& b_{1(\hat{q}-\hat{s}+\hat{1})}&\cdots&b_{1\hat{q}}\\
     \cdot&\cdots&\cdot &\vline& \cdot&\cdots&\cdot\\
      \cdot&\cdots&\cdot&\vline& \cdot&\cdots&\cdot\\
      b_{r (p-r+1)}&\cdots&b_{rp} &\vline& b_{r(\hat{q}-\hat{s}+\hat{1})}&\cdots&b_{r\hat{q}}\\
      \hline
      b_{\hat{1} (p-r+1)}&\cdots&b_{\hat{1}p} &\vline& b_{\hat{1}(\hat{q}-\hat{s}+\hat{1})}&\cdots&b_{\hat{1}\hat{q}}\\
      \cdot&\cdots&\cdot &\vline& \cdot&\cdots&\cdot\\
      \cdot&\cdots&\cdot&\vline&\cdot&\cdots&\cdot\\
      b_{\hat{s}(p-r+1)}&\cdots&b_{\hat{s}p} &\vline& b_{\hat{s}(\hat{q}-\hat{s}+\hat{1})}&\cdots&b_{\hat{s}\hat{q}}
 \end{pmatrix}  \nonumber
   \end{equation*}
 One can see that
 \begin{equation}
\Ber(A^{-1})_{\Tilde{u}}^{\Tilde{v}}=\Ber T \label{s1}
 \end{equation}
 where $T$ is the super matrix of size $p|q \times p|q$ described as follows:
 \begin{equation*}
   T_{j}=\begin{cases}
       (A^{-1})_{p-r+j} \qquad &\hbox{for} \qquad 1\leq j \leq r  \\
       e_{j} \qquad &\hbox{for} \qquad r+1\leq j \leq p
   \end{cases}
 \end{equation*}
where $T_{j}$ denotes the $j$-th even column of $T$, $(A^{-1})_k$ is the $k$-th even column of $A^{-1}$ and $e_{j}$ denotes an even vector of size $p|q$, whose  entries are all $0$ except at the $j$-th even position, where the entry is $1$. On the other hand,
 \begin{eqnarray*}
   T_{\hat{l}}=\begin{cases}
       (A^{-1})_{\hat{q}-\hat{s}+\hat{l}} \qquad &\hbox{for} \qquad \hat{1}\leq \hat{l} \leq \hat{s}  \\
       e_{\hat{l}} \qquad &\hbox{for} \qquad \hat{s}+\hat{1}\leq \hat{l} \leq\hat{ q},
   \end{cases}
 \end{eqnarray*}
 where $T_{\hat{l}}$ denotes the $\hat{l}$-th odd column of $T$, $(A^{-1})_{\hat{k}}$ is the $\hat{k}$-th-odd column of $A^{-1}$ and $e_{\hat{l}}$ denotes an odd vector of size $p|q$ whose  entries are all $0$ except at the $\hat{l}$-th odd position, where the entry is 1.

In other words, $T$ is a supermatrix whose first $r$ even columns coincide with the last $r$ even columns of $A^{-1}$, the first $\hat{s}$ odd columns coincide with the last $\hat{s}$ odd columns of $A^{-1}$ and the remaining even and odd columns are $e_{j}$ and $e_{\hat{l}}$, respectively.

Therefore, using Equation (\ref{s1}) and the multiplicative property of Berezinian, we have:
\begin{eqnarray}
\Ber A\,\Ber(A^{-1})_{\Tilde{u}}^{\Tilde{v}}=\Ber A\,\Ber T=\Ber(AT). \label{s2}
\end{eqnarray}
Let us denote $C=AT$; then, using above notation:
\begin{eqnarray*}
   C_{j}=\begin{cases}
       e_{p-r+j} \qquad &\hbox{for} \qquad 1\leq j \leq r,  \\
       A_{j} \qquad &\hbox{for} \qquad r+1\leq j \leq p,
   \end{cases}
 \end{eqnarray*}
 and
 \begin{eqnarray*}
   C_{\hat{l}}=\begin{cases}
       e_{{\hat{q}-\hat{s}+\hat{l}}}  \qquad &\hbox{for} \qquad \hat{1}\leq \hat{l} \leq \hat{s}  \\
       A_{\hat{l}}\qquad &\hbox{for} \qquad \hat{s}+\hat{1}\leq \hat{l} \leq \hat{q}.
   \end{cases}
 \end{eqnarray*}
Then, one can compute:
\begin{eqnarray}
\Ber (AT) =\Ber C =(-1)^{r(p+1)+s(q+1)}\Ber A^{u}_{v}
\end{eqnarray}
and by putting this into Equation (\ref{s2}) the desired identity is obtained.
\end{proof}

\section{The Super Pl\"ucker Relations for $\mathrm{SL}(1|1)$} \label{pl-11}
We want to use  the super Jacobi identity as in Theorem (\ref{superjacobi}) to derive the relations
among the generators in the superalgebra $\Tilde{\mathcal{O}}$ defined in (\ref{otilde}). This approach for the construction of classical Pl\"ucler relations has been explained in the Appendix \ref{jacobi-app}, Proposition \ref{P1}.

We illustrate in detail in this section the case of $r=s=1$, the general case
will be discussed shortly, in the next section, following the same strategy.

\begin{theorem}{\sl The super Pl\"ucker relations for $\mathrm{SL}(1|1)$}.
\label{super-pl-thm}
Let the notation be as in (\ref{superminor1}), (\ref{superminor2}) and (\ref{superminor3})
for $r=s=1$. Then:
\begin{align}
&X_{i|\hat{\mu}}X_{i|\hat{\mu}}^{*}=1, \label{sp1}
\\[0.3cm]
  & X_{i|\hat{\mu}}X_{j|\hat{\nu}}= \Ber \begin{pmatrix}
       X_{j|\hat{\mu}} &\vline& X_{\hat{\nu}|\hat{\mu}}\\
       \hline
       X_{i|j}^{*} &\vline& X_{i|\hat{\nu}}^{*}
   \end{pmatrix}, \label{sp2}
\\[0.3cm]
&X_{i|\hat{\mu}}X_{\hat{\lambda}|\hat{\nu}}= \Ber \begin{pmatrix}
       X_{\hat{\lambda}|\hat{\mu}} &\vline& X_{\hat{\nu}|\hat{\mu}}\\
       \hline
       X_{i|\hat{\lambda}}^{*} &\vline& X_{i|\hat{\nu}}^{*}
   \end{pmatrix}, \label{sp3}
\\[0.3cm]
&
X_{i|\hat{\mu}}^{*}X_{j|k}^{*}= \Ber^{*} \begin{pmatrix}
       X_{j|\hat{\mu}} &\vline& X_{k|\hat{\mu}}\\
       \hline
       X_{i|j}^{*} &\vline& X_{i|k}^{*}
   \end{pmatrix}, \label{sp4}
\end{align}
where the Latin (even) indices run from 1 to $p$ and the Greek (odd) indices run from 1 to $q$.
\end{theorem}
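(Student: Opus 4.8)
I would separate the normalization (\ref{sp1}) from the three quadratic relations (\ref{sp2})--(\ref{sp4}) and derive the latter from a single device, namely the super Jacobi identity of Theorem \ref{superjacobi} applied to a suitable $2|2\times 2|2$ supermatrix, mimicking the way the classical Pl\"ucker relations drop out of the ordinary Jacobi identity (Proposition \ref{P1}). Relation (\ref{sp1}) I would dispose of at once: $A_{i|\hat\mu}$ is an invertible $1|1\times 1|1$ supermatrix, so $X^{*}_{i|\hat\mu}=\Ber^{*}A_{i|\hat\mu}=\Ber A_{i|\hat\mu}^{-1}$ by (\ref{ob}), and multiplicativity of the Berezinian gives $X_{i|\hat\mu}X^{*}_{i|\hat\mu}=\Ber A_{i|\hat\mu}\,\Ber A_{i|\hat\mu}^{-1}=1$.

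For (\ref{sp2}) the plan is to form an invertible $2|2\times 2|2$ supermatrix $A$ by bordering the $1|1\times 2|2$ block of (\ref{blocks}) on the even columns $i,j$ and odd columns $\hat\mu,\hat\nu$ with one extra even and one extra odd row of standard basis vectors, chosen so that $\Ber A$ is the inverted super minor $X^{*}_{i|\hat\mu}$ of the pivot pair $(i,\hat\mu)$, while with $u=(2|\hat 2)$, $v=(1|\hat 1)$ the deleted block $A^{u}_{v}$ is the $1|1$ submatrix on columns $(j,\hat\nu)$, so that $\Ber A^{u}_{v}=X_{j|\hat\nu}$. Theorem \ref{superjacobi}, applied in the ambient $2|2$ with split $1|1$ (where the sign $(-1)^{t}$ equals $+1$), then reads $\Ber A\,\Ber(A^{-1})^{\Tilde{v}}_{\Tilde{u}}=\Ber A^{u}_{v}$, i.e. $\Ber(A^{-1})^{\Tilde{v}}_{\Tilde{u}}=X_{j|\hat\nu}\,(X^{*}_{i|\hat\mu})^{-1}=X_{i|\hat\mu}X_{j|\hat\nu}$ after clearing the pivot through (\ref{sp1}); this is exactly the left-hand side of (\ref{sp2}). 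It then remains to evaluate the same quantity from the other side: by Super Cramer's rule (Theorem \ref{supercramer}) every entry of $A^{-1}$ is a ratio of a Berezinian (on even rows) or an inverted Berezinian (on odd rows) of $A$ with one column replaced, and these ratios are precisely the entries $X_{j|\hat\mu}$, $X_{\hat\nu|\hat\mu}$, $X^{*}_{i|j}$, $X^{*}_{i|\hat\nu}$ of the $1|1$ matrix on the right of (\ref{sp2}). Equating the two evaluations of $\Ber(A^{-1})^{\Tilde{v}}_{\Tilde{u}}$ yields the identity. I would treat (\ref{sp3}) and (\ref{sp4}) by the same recipe, changing only parities: in (\ref{sp3}) the even column $j$ is replaced by an odd column $\hat\lambda$ placed in an even slot, so $A$ becomes a fake supermatrix of type-I; in (\ref{sp4}) one runs the whole argument on the parity-reversed matrix, replacing $\Ber$ by $\Ber^{*}$ throughout, which produces the starred minors and the outer $\Ber^{*}$.

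The step I expect to be genuinely delicate is matching $(A^{-1})^{\Tilde{v}}_{\Tilde{u}}$ with the explicit matrix of super and fake super minors: one has to choose the bordering basis vectors so that the Super Cramer cofactors land on exactly the intended minors, keep the even-row denominators $\Ber A$ and odd-row denominators $\Ber^{*}A$ separate and then merge them via $\Ber^{*}A=(\Ber A)^{-1}$, and carry the global sign $(-1)^{t}$ correctly; the persistent distinction between $\Ber$ and $\Ber^{*}$, equivalently between fake supermatrices of type-I and type-II, is what differentiates (\ref{sp2})--(\ref{sp4}) and is where errors are likely. To be certain of all signs I would cross-check each relation by a direct expansion in the entries: writing, for example, $X_{i|\hat\mu}=y_{\hat1\hat\mu}^{-1}(x_{1i}-\alpha_{1\hat\mu}y_{\hat1\hat\mu}^{-1}\beta_{\hat1i})$ and the fake minors analogously, and using that the odd generators anticommute and square to zero, one finds that the superfluous terms cancel in pairs (the decisive cancellations being of the type $\beta_{\hat1i}\alpha_{1\hat\nu}+\alpha_{1\hat\nu}\beta_{\hat1i}=0$), after which the two sides agree term by term.
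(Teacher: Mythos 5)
Your handling of (\ref{sp1}) is the paper's, and your plan for (\ref{sp2}) --- super Jacobi identity plus super Cramer on a bordered $2|2\times 2|2$ matrix, with sign $(-1)^{t}=+1$ --- is also exactly the paper's route; but as written it contains two compensating misstatements. With the columns $i,j|\hat\mu,\hat\nu$ in their natural slots and basis-vector bordering rows, one always gets $\Ber A = X_{i|\hat\mu}$, never the inverted minor $X^{*}_{i|\hat\mu}$ you ask for: the Berezinian formula forces the odd-odd block into the denominator, so a bordered matrix of this shape can only produce un-starred minors, and no choice of bordering basis vectors changes that. Correspondingly, the super Cramer entries of $(A^{-1})^{\Tilde{v}}_{\Tilde{u}}$ are not the bare minors you claim but $-X_{j|\hat\mu}X^{*}_{i|\hat\mu}$, $-X_{\hat\nu|\hat\mu}X^{*}_{i|\hat\mu}$, $-X^{*}_{i|j}X_{i|\hat\mu}$, $-X^{*}_{i|\hat\nu}X_{i|\hat\mu}$, whose Berezinian is $(X^{*}_{i|\hat\mu})^{2}$ times the right-hand side of (\ref{sp2}). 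Since $X_{i|\hat\mu}(X^{*}_{i|\hat\mu})^{2}=X^{*}_{i|\hat\mu}$, the paper's correct bookkeeping lands on the same final identity as your incorrect one, so (\ref{sp2}) survives --- but only because your two errors cancel.

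The genuine gap is in (\ref{sp3}) and (\ref{sp4}). ``The same recipe, changing only parities'' means applying Theorem \ref{superjacobi} to a matrix that is a \emph{fake} supermatrix: for (\ref{sp3}) the column $\hat\lambda$ occupies an even slot, and for (\ref{sp4}), in the parity-reversed layout, the even column $k$ occupies a slot of the wrong parity. But Theorem \ref{superjacobi} is stated and proved only for invertible even supermatrices; its proof manipulates $A^{-1}$ and invokes multiplicativity of $\Ber$, and neither the inverse of a fake supermatrix nor a Jacobi identity for fake supermatrices is defined or established anywhere in the paper. This is precisely the obstruction the paper itself flags (``we would need to generalize the super Jacobi identity to include some fake supermatrices'') and then avoids: for (\ref{sp3}) and (\ref{sp4}) the paper keeps a genuine invertible $B$ (with $\Ber B = X_{i|\hat\mu}$, resp.\ $X^{*}_{i|\hat\mu}$), builds a fake matrix $T$ whose columns are the second columns of $B^{-1}$, the vector $B^{-1}t$ (with $t$ the extra column of the ``wrong'' parity) and $e_{\hat 2}$, computes $BT$ explicitly as in (\ref{SFT14}), evaluates $\Ber T$ entrywise by super Cramer, and concludes from $\Ber(BT)=\Ber B\,\Ber T$. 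Your proposal needs either this detour or a proof of a Jacobi identity valid for fake supermatrices; without one of the two, the derivations of (\ref{sp3}) and (\ref{sp4}) do not go through. (Your fallback of verifying the relations by direct expansion in the generators would settle the $\mathrm{SL}(1|1)$ case, but it is a different proof, not a repair of the recipe, and it is not what the paper does.)
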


We call these the \textit{super Pl\"ucker relations} for $\mathrm{SL}(1|1)$. Moreover, it is important to note that these super Pl\"ucker relations coincides with the super Pl\"ucker relations appeared in \cite[Th. 8]{Voronov} obtained with a different approach. 

\begin{proof}
The relations (\ref{sp1}) follow from the Equation (\ref{ob}).

Now, for any indices $(i|\hat{\mu})$ and $(j|\hat{\nu})$, fix the following matrix:
\begin{eqnarray*}
B=\begin{pmatrix}
 x_{1i} & x_{1j} & \vline& x_{1\hat{\mu}} & x_{1\hat{\nu}}\\
 0 & 1 & \vline& 0 & 0\\
 \hline
 x_{\hat{1}i} & x_{\hat{1}j} &\vline& x_{\hat{1}\hat{\mu}} & x_{\hat{1}\hat{\nu}}\\
 0 & 0 &\vline& 0 & 1
\end{pmatrix}.
\end{eqnarray*}
Clearly,
\begin{eqnarray}
\Ber B=X_{i|\hat{\mu}}    \label{SFT11}
\end{eqnarray}
By setting $u=(2,\hat{2})$ and $v=(1,\hat{1})$ we get:
\begin{eqnarray}
 \Ber B_{v}^{u}=\Ber \begin{pmatrix}
     x_{1j} &\vline& x_{1\hat{\nu}}\\
     \hline
     x_{\hat{1}j} &\vline& x_{\hat{1}\hat{\nu}}
 \end{pmatrix}= X_{j|\hat{\nu}}.  \label{SFT12}
\end{eqnarray}
Moreover, since $\tilde u=(1|\hat1)$ and  $\tilde v=(2|\hat 2)$, using the super Cramer rule one can compute:
\begin{equation*}
(B^{-1})^{\Tilde{v}}_{\Tilde{u}}=\begin{pmatrix}
    (B^{-1})_{12} &\vline& (B^{-1})_{1\hat{2}}\\
    \hline
    (B^{-1})_{\hat{1}2} &\vline& (B^{-1})_{\hat{1}\hat{2}}
\end{pmatrix}=
\begin{pmatrix}
    -X_{j|\hat{\mu}}X^{*}_{i|\hat{\mu}} &\vline& -X_{\hat{\nu}|\hat{\mu}}X^{*}_{i|\hat{\mu}}\\
    \hline
    -X^{*}_{i|j} X_{i|\hat{\mu}} &\vline& -X^{*}_{i|\hat{\nu}} X_{i|\hat{\mu}}
\end{pmatrix}.
\end{equation*}
Hence,
\begin{eqnarray}
\Ber (B^{-1})^{\Tilde{v}}_{\Tilde{u}}= (X^{*}_{i|\hat{\mu}})^{2}\Ber \begin{pmatrix}
    X_{j|\hat{\mu}} &\vline& X_{\hat{\nu}|\hat{\mu}}\\
    \hline
    X^{*}_{i|j} &\vline& X^{*}_{i|\hat{\nu}}
\end{pmatrix}. \label{SFT13}
\end{eqnarray}
Substituting Equations (\ref{SFT11}), (\ref{SFT12}) and (\ref{SFT13}) in the super Jacobi identity, one arrives to the relations (\ref{sp2}).

For the  other relations, we would need to generalize the super Jacobi identity to include some fake supermatrices. We instead use an argument, similar to the one used in the proof of the super Jacobi identity.  Consider the matrix
\begin{eqnarray}
B=\begin{pmatrix}
 x_{1i} & 0 & \vline& x_{1\hat{\mu}} & x_{1\hat{\nu}}\\
 0 & 1 & \vline& 0 & 0\\
 \hline
 x_{\hat{1}i} & 0 &\vline& x_{\hat{1}\hat{\mu}} & x_{\hat{1}\hat{\nu}}\\
 0 & 0 &\vline& 0 & 1
\end{pmatrix}
\end{eqnarray}
with $\Ber B=X_{i|\hat \mu}$. We construct a fake supermatrix $T$ in the following way: the first even column is the second even column of $B^{-1}$, the first odd column is the second  odd column of $B^{-1}$; at the second even column position we have $B^{-1}t$  where

$$t:=\begin{pmatrix}
        x_{1\hat{\lambda}}\\
        0\\
        \hline
        x_{\hat{1}\hat{\lambda}}\\
        0
    \end{pmatrix}.$$
and at the second odd column position we have  $e_{\hat{2}}$.
Then, clearly:
\begin{eqnarray}
    \Ber(BT)=\Ber \begin{pmatrix}
 0 &  x_{1\hat{\lambda}} & \vline& 0 & x_{1\hat{\nu}}\\
 1 & 0 & \vline& 0 & 0\\
 \hline
 0 & x_{\hat{1}\hat{\lambda}} &\vline& 0 & x_{\hat{1}\hat{\nu}}\\
 0 & 0 &\vline& 1 & 1
\end{pmatrix}=X_{\hat{\lambda}|\hat{\nu}}. \label{SFT14}
\end{eqnarray}

On the other hand, one can explicitly compute, using super Cramer's rule:
\begin{eqnarray}
    T=\begin{pmatrix}
    0 &  X_{\hat{\lambda}|\hat{\mu}}X^{*}_{i|\hat{\mu}} & \vline& -X_{\hat{\nu}|\hat{\mu}}X^{*}_{i|\hat{\mu}} & 0\\
 1 & 0 & \vline& 0 & 0\\
 \hline
 0 & X^{*}_{i|\hat{\lambda}} X_{i|\hat{\mu}} &\vline& -X^{*}_{i|\hat{\nu}}X_{i|\hat{\mu}} & 0\\
 0 & 0 &\vline& 1 & 1
    \end{pmatrix}
\end{eqnarray}
Therefore,
\begin{eqnarray}
\Ber T= (X^{*}_{i|\hat{\mu}})^{2} \Ber \begin{pmatrix}
    X_{\hat{\lambda}|\hat{\mu}} &\vline& X_{\hat{\nu}|\hat{\mu}}\\
    \hline
    X^{*}_{i|\hat{\lambda}} &\vline& X^{*}_{i|\hat{\nu}}
\end{pmatrix}.
\end{eqnarray}
Using the multiplicative property of the Berezinian and equation (\ref{SFT14}) one arrives to the relations (\ref{sp3}).

Similarly, the relations (\ref{sp4}) follow. Consider the supermatrix
$$B=\begin{pmatrix}x_{\hat{1}\hat{\mu}}&0&\vline&x_{\hat{1}i}&x_{\hat{1}j}\\
0&1&\vline&0&0\\\hline
x_{{1}\hat{\mu}}&0&\vline&x_{{1}i}&x_{{1}j}\\
0&0&\vline&0&1
\end{pmatrix},$$ whose Berezinian is $\Ber B= X_{i|\hat{\mu}}^*$. Define a the supermatrix $T$ whose first even column is the second even column of $B^{-1}$ and whose second even column is $B^{-1}t$ with
$$t=\begin{pmatrix}x_{\hat{1}k}\\0\\\hline x_{1k}\\0\end{pmatrix};$$ on the other hand, its first odd column is the second odd column of $B^{-1}$ and its second odd column is $e_{\hat{2}}$. Then
$$BT=\begin{pmatrix}0&x_{\hat{1}k}&\vline&0&x_{\hat{1}j}\\
1&0&\vline&0&0\\\hline
0&x_{1k}&\vline&0&x_{1j}\\
0&0&\vline&1&1\end{pmatrix}.$$
The Berezinian $\Ber (BT)=X^*_{j|k}$. As before, we can compute $T$ explicitly with the help of the super Cramer rule:
$$T=\begin{pmatrix}0&X^*_{i|k}X_{i|\hat{\mu}}&\vline&-X^*_{i|j} X_{i|\hat{\mu}} &0\\
1&0&\vline&0&0\\\hline
0&X_{k|\hat{\mu}}X^*_{i|{\mu}}&\vline&-X_{j|\hat{\mu}}X^*_{i|\hat{\mu}}&0\\
0&0&\vline&0&1
\end{pmatrix}.$$ Then
$$\Ber T=X_{i|\hat{\mu}}^2\Ber\begin{pmatrix}X^*_{i|\hat{\mu}}&\vline&X^*_{i|j}\\\hline X_{k|\hat{\mu}}&\vline&X_{j|\hat{\mu}}\end{pmatrix},$$
so $$\Ber(BT)=\Ber B\,\Ber T=X^*_{i|\hat{\mu}}X_{i|\hat{\mu}}^2\Ber^*\begin{pmatrix}X_{j|\hat{\mu}}&\vline&X^*_{k|\hat{\mu}}\\\hline X_{i|j}&\vline&X^*_{i|k}\end{pmatrix}.$$ We then arrive to (\ref{sp4}), as we wanted to show.
\end{proof}

\section{Second Fundamental Theorem of super invariant theory for
$\mathrm{SL}(1|1)$}\label{sec-sft}

Now, we want to prove that the relations that appeared in Equations
(\ref{sp1}), (\ref{sp2}), (\ref{sp3}), and (\ref{sp4}) completely characterize the
super ring of invariants $\Tilde{\mathcal{O}}^{\mathrm{SL}(1|1)}$. In this way, we prove the Second Fundamental Theorem of super invariant theory for $\mathrm{SL}(1|1)$. 

Let us recall the following basic fact from linear algebra.

Let $\{v_{1}, v_{2}, \cdots \}$ be an ordered (countable) basis of a vector space $V$.
For any non-zero vector $v=\sum_{i}a_{i}v_{i}$, define the \textit{leading term} of $v$
as $\mathrm{Lt}(v):=v_{i_{0}}$, where $a_{i_{0}}\neq 0$, and $a_{i}=0$ for all $v_{i}< v_{i_{0}}$.

\begin{lemma}\label{linear-ind}
Any collection of vectors $\{v_{1}, v_{2}, \dots \}$ in $V$, with distinct leading
terms with respect to an ordered basis, as above, is linearly independent.
 \end{lemma}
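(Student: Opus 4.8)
The plan is to prove linear independence by a standard ``leading term elimination'' argument, arguing by contradiction against a hypothetical nontrivial finite linear dependence. Suppose, for contradiction, that the collection $\{v_1, v_2, \dots\}$ is linearly dependent. Then there is a finite subset and scalars, not all zero, giving a relation $\sum_{k=1}^{n} c_k v_{j_k} = 0$ with all $c_k \neq 0$ (discarding any zero coefficients). I would then focus on the leading terms $\mathrm{Lt}(v_{j_k})$, which by hypothesis are pairwise distinct basis vectors.

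The key step is to isolate the ``smallest'' leading term among those appearing in the relation. Among the finitely many distinct basis vectors $\mathrm{Lt}(v_{j_1}), \dots, \mathrm{Lt}(v_{j_n})$, let $v_{i_0}$ be the one that is least in the ordering of the basis; say it equals $\mathrm{Lt}(v_{j_m})$ for a unique index $m$ (unique precisely because the leading terms are distinct). Now I would examine the coefficient of $v_{i_0}$ in the expansion $\sum_k c_k v_{j_k} = 0$ written in the basis $\{v_i\}$. For every $k \neq m$, the leading term of $v_{j_k}$ is strictly greater than $v_{i_0}$, and every basis vector appearing in $v_{j_k}$ is $\geq \mathrm{Lt}(v_{j_k}) > v_{i_0}$; hence $v_{j_k}$ contributes zero to the coefficient of $v_{i_0}$. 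For $k = m$, the coefficient of $v_{i_0} = \mathrm{Lt}(v_{j_m})$ inside $v_{j_m}$ is nonzero by the definition of leading term. Thus the total coefficient of $v_{i_0}$ in the sum equals $c_m$ times that nonzero scalar, which is nonzero since $c_m \neq 0$.

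This contradicts the fact that $\sum_k c_k v_{j_k} = 0$ forces every basis coefficient to vanish, and the contradiction establishes the claim. I do not anticipate a serious obstacle here; the only point requiring a little care is the observation that all basis vectors occurring in $v_{j_k}$ are $\geq \mathrm{Lt}(v_{j_k})$, which is exactly what the definition of $\mathrm{Lt}$ guarantees, so that the chosen least leading term $v_{i_0}$ cannot be cancelled by contributions from the other $v_{j_k}$. The finiteness of any genuine dependence relation is what makes ``the least leading term'' well defined, so it is worth stating explicitly that linear dependence refers, as usual, to a finite nontrivial relation.
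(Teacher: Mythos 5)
Your proof is correct. Note that the paper itself gives no proof of this lemma at all: it is recalled as a ``basic fact from linear algebra,'' so there is no argument to compare against. Your leading-term-elimination argument is exactly the standard one the authors implicitly rely on: in a hypothetical nontrivial finite relation, the least of the (distinct) leading terms appears with nonzero coefficient coming from exactly one vector and cannot be cancelled by the others, since by the paper's definition of $\mathrm{Lt}$ every basis vector occurring in each $v_{j_k}$ is $\geq \mathrm{Lt}(v_{j_k})$, hence strictly greater than the chosen minimal leading term. Your explicit remarks on the uniqueness of the index $m$ and on finiteness guaranteeing a well-defined minimum are the right points of care, and the argument also works verbatim for the total order the paper later constructs on its monomial basis in the proof of Lemma \ref{basis-gen}, which is where this lemma is actually used.
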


The superalgebra $\Tilde{\mathcal{O}}^{\mathrm{SL}(1|1)}$ is generated by $X_{i|\hat{\mu}}$, $X_{\hat{\mu}| \hat{\nu}}$, $X_{j|\hat{\nu}}^{*}$, and $X_{i|j}^{*}$. Each element of $\Tilde{\mathcal{O}}^{\mathrm{SL}(1|1)}$ is a linear combination of expressions of the following form:
\begin{eqnarray} \label{expr}
\underset{i|\hat{\mu}, j|\hat{\lambda}, \hat{\eta}|\hat{\nu}, k|l }{\Pi} X_{i|\hat{\mu}}^{d_{i|\hat{\mu}}} X_{\hat{\eta}| \hat{\nu}}^{d_{\hat{\eta}|\hat{\nu}}}X^{*d_{j|\hat{\lambda}}}_{j|\hat{\lambda}}X^{*d_{k|l}}_{k|l},
\end{eqnarray}
where $d_{i|\hat{\mu}}, d_{j|\hat{\lambda}}\in \{0,1,2,...\}$ and $d_{\hat{\eta}|\hat{\nu}}, d_{k|l}\in \{0,1\}$. However, due to super Pl\"ucker relations (\ref{sp1}), (\ref{sp2}), (\ref{sp3}) and (\ref{sp4}), we can express:
\begin{eqnarray}
X_{i|\hat{\mu}}&=& X^*_{1|\hat{1}}X_{1|\hat{\mu}}X_{i|\hat{1}} -X^*_{1|\hat{1}}X_{\hat{\mu}|\hat{1}}X_{1|\hat{\mu}}^{2}X_{1|i}^{*}, \label{spp1}\\
X^{*}_{j|\hat{\lambda}}&=& X_{1|\hat{1}}X^*_{j|\hat{1}}X^*_{1|\hat{\lambda}} -X_{1|\hat{1}}X_{1|j}^{*}X^{*2}_{j|\hat{1}}X_{\hat{\lambda}|\hat{1}},\label{spp2}\\
X_{\hat{\eta}|\hat{\nu}}&=& X^*_{1|\hat{1}}X_{1|\hat{\nu}}X_{\hat{\eta}|\hat{1}} -X^*_{1|\hat{1}}X_{\hat{\nu}|\hat{1}}X_{1|\hat{\nu}}^{2}X_{1|\hat{\eta}}^{*},\label{spp3}\\
X^*_{k|l}&=& X_{1|\hat{1}}X^*_{k|\hat{1}}X^*_{1|l} -X_{1|\hat{1}}X^*_{1|k}X^{*2}_{k|\hat{1}}X_{l|\hat{1}}. \label{spp4}
\end{eqnarray}
{ for $1\leq i,j,k,l \leq p$ and $1\leq \mu, \lambda, \nu, \eta \leq q$.}

Therefore, every expression in $(\ref{expr})$ can be written and arranged as a linear combination of  expressions of the following form:
\begin{eqnarray}
P=P_{1|\hat{\underline{\mu}}}P_{\underline{i}|\hat{1}}P_{\hat{\underline{\eta}}|\hat{1}}P^{*}_{1|\hat{\underline{\lambda}}}P^{*}_{\underline{j}|\hat{1}}P^{*}_{1|\underline{l}}, \label{standard}
\end{eqnarray}
(the underlined indices mean multiindices) where:
\begin{eqnarray*}
P_{1|\hat{\underline{\mu}}}:=X^{d_{\hat{\mu}_{1}}}_{1|\hat{\mu}_{1}}\cdots X^{d_{\hat{\mu}_{\alpha}}}_{1|\hat{\mu}_{\alpha}},
 \qquad P_{\underline{i}|\hat{1}}:=X^{d_{i_{1}}}_{i_{1}|\hat{1}}\cdots X^{d_{i_{a}}}_{i_{a}|\hat{1}},
\qquad P_{\hat{\underline{\eta}}|\hat{1}}:=X_{\hat{\eta}_{1}|\hat{1}}\cdots X_{\hat{\eta}_{\beta}|\hat{1}},\\
P^{*}_{1|\hat{\underline{\lambda}}}:=X^{*d_{\hat{\lambda}_{1}}}_{1|\hat{\lambda}_{1}}\cdots X^{*d_{\hat{\lambda}_{\gamma}}}_{1|\hat{\lambda}_{\gamma}}, \qquad P^{*}_{\underline{j}|\hat{1}}:=X^{*d_{j_{1}}}_{j_{1}|\hat{1}}\cdots X^{*d_{j_{b}}}_{j_{b}|\hat{1}}, \qquad P^{*}_{1|\underline{l}}:= X^{*}_{1|l_{1}}\cdots X^{*}_{1|l_{c}},
\end{eqnarray*}
and
$$1 \leq \mu_{1}<\cdots < \mu_{\alpha} \leq q, \qquad 1<i_{1} < \cdots < i_{a}\leq p,$$
$$1 < \eta_{1} < \cdots < \eta_{\beta} \leq q, \qquad 1 \leq \lambda_{1} < \cdots < \lambda_{\gamma} \leq q,$$
$$ 1<j_{1} < \cdots < j_{b}\leq p, \qquad 1<l_{1} < \cdots < l_{c}\leq p,$$
with the conditions 
\begin{itemize}
    \item $\hat{\mu}_{\epsilon} \neq \hat{\lambda}_{\delta}$ for any $ \epsilon, \delta \in \{1,\cdots, q\}$,
    \item $i_{m} \neq j_{n}$ for any $m,n \in \{2, \cdots , p\}$.
\end{itemize}

\medskip
\begin{definition}\label{standard-gen-def}
{ We call the elements of the set:
\begin{eqnarray*}
\{X_{1|\hat{\mu}}, X_{i|\hat{1}}, X^*_{1|\hat{\mu}}, X^*_{i|\hat{1}}, X_{\hat{\lambda}|\hat{1}}, X^*_{1|j}| 1 \leq i,j \leq p, ~ 2 \leq \hat{\mu}, \hat{\lambda} \leq q\}
\end{eqnarray*}
\textit{standard expressions}. The products of the standard expressions that appear in $P$, as in 
(\ref{standard}), are \textit{standard products}.     }
\end{definition}
\medskip

\begin{lemma}\label{basis-gen}
The standard products are linearly independent and form a vector space basis of $\Tilde{\mathcal{O}}^{\mathrm{SL}(1|1)}$.
\end{lemma}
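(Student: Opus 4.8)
The plan is to treat the two assertions separately: spanning is essentially already in hand, so the real work is linear independence, which I would extract from Lemma \ref{linear-ind}. For spanning, the discussion preceding the statement shows that every element of $\Tilde{\mathcal{O}}^{\mathrm{SL}(1|1)}$ is a linear combination of the monomials (\ref{expr}) in the four types of generators, and that the super Pl\"ucker relations (\ref{spp1})--(\ref{spp4}) rewrite each such monomial as a linear combination of standard products as in (\ref{standard}); the relations (\ref{sp1}), namely $X_{1|\hat\mu}X^*_{1|\hat\mu}=1$ and $X_{i|\hat1}X^*_{i|\hat1}=1$, are then used to cancel any pair sharing an index, which is exactly what enforces the exclusion conditions $\hat\mu_\epsilon\neq\hat\lambda_\delta$ and $i_m\neq j_n$. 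Hence the standard products span, and it remains only to prove they are linearly independent.

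For independence I would realize $\Tilde{\mathcal{O}}^{\mathrm{SL}(1|1)}$ inside the ambient localized superalgebra $\Tilde{\mathcal{O}}=\mathcal{O}[x_{1i}^{-1},y_{\hat1\hat\mu}^{-1}]$, whose monomials --- Laurent monomials in the even coordinates $x_{1i},y_{\hat1\hat\mu}$ times square-free monomials in the odd coordinates $\alpha_{1\hat\mu},\beta_{\hat1 i}$ --- form a vector space basis. I fix the term order that first prefers monomials with fewer odd factors and then refines by a fixed lexicographic order on the even Laurent exponents; reading off the Berezinian formulas, this gives
\[
\mathrm{Lt}(X_{1|\hat\mu})=x_{11}y_{\hat1\hat\mu}^{-1},\quad \mathrm{Lt}(X_{i|\hat1})=x_{1i}y_{\hat1\hat1}^{-1},\quad \mathrm{Lt}(X^*_{1|\hat\mu})=x_{11}^{-1}y_{\hat1\hat\mu},
\]
\[
\mathrm{Lt}(X^*_{i|\hat1})=x_{1i}^{-1}y_{\hat1\hat1},\quad \mathrm{Lt}(X_{\hat\lambda|\hat1})=\alpha_{1\hat\lambda}y_{\hat1\hat1}^{-1},\quad \mathrm{Lt}(X^*_{1|j})=\beta_{\hat1 j}x_{11}^{-1}.
\]
Because this order is multiplicative and the odd variables occurring in the leading terms of the factors of a standard product $P$ as in (\ref{standard}) are pairwise distinct (the $\alpha_{1\hat\eta_\sigma}$ carry distinct $\eta$'s, the $\beta_{\hat1 l_\tau}$ distinct $l$'s, and $\alpha\neq\beta$), the product of these leading terms is a nonzero monomial, so it equals $\mathrm{Lt}(P)$.

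It then remains to show $P\mapsto\mathrm{Lt}(P)$ is injective on standard products, and this is the heart of the argument. Writing $\mathrm{Lt}(P)=x_{11}^{A}y_{\hat1\hat1}^{B}\prod_{i\geq2}x_{1i}^{a_i}\prod_{\mu\geq2}y_{\hat1\hat\mu}^{b_\mu}\prod_\sigma\alpha_{1\hat\eta_\sigma}\prod_\tau\beta_{\hat1 l_\tau}$, the odd part immediately recovers $\hat{\underline\eta}$ and $\underline l$, hence the factors $P_{\hat{\underline\eta}|\hat1}$ and $P^*_{1|\underline l}$. For $i\geq2$ the exponent $a_i$ gets a positive contribution $+d_i$ only from $X_{i|\hat1}$ and a negative one $-d_j$ only from $X^*_{j|\hat1}$; the condition $i_m\neq j_n$ forbids both at once, so the sign of $a_i$ decides whether $i$ indexes $P_{\underline i|\hat1}$ or $P^*_{\underline j|\hat1}$, and $|a_i|$ gives the multiplicity. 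Symmetrically, for $\mu\geq2$ the condition $\hat\mu_\epsilon\neq\hat\lambda_\delta$ lets the sign of $b_\mu$ separate $P_{1|\hat{\underline\mu}}$ from $P^*_{1|\hat{\underline\lambda}}$. Thus all six constituent sub-products are determined by $\mathrm{Lt}(P)$, distinct standard products have distinct leading terms, and Lemma \ref{linear-ind} yields independence; with spanning this proves the standard products form a basis. The main obstacle is exactly this reconstruction: one must see that the exclusion conditions built into (\ref{standard}) are precisely what keep the two competing sources of each even exponent from overlapping --- without them $P\mapsto\mathrm{Lt}(P)$ fails to be injective, and indeed $X_{1|\hat\mu}X^*_{1|\hat\mu}=1$ shows the unconstrained products are genuinely dependent. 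Checking the leading-term formulas and the multiplicativity of the chosen order is routine by comparison.
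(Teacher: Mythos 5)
Your proposal is correct and takes essentially the same approach as the paper: spanning is deduced from the rewriting preceding the statement, and linear independence is obtained by computing leading terms of the standard products in the Laurent-monomial basis of $\Tilde{\mathcal{O}}$ with respect to a term order under which the purely even term of each super minor leads, then invoking Lemma \ref{linear-ind}; your order differs from the paper's only cosmetically, produces the same leading terms, and your reconstruction of the sub-products from $\mathrm{Lt}(P)$ is in fact more explicit than the paper's bare assertion of distinctness. The only point left implicit (in the paper as well) is that $P_{1|\hat{\underline{\mu}}}$ or $P^{*}_{1|\hat{\underline{\lambda}}}$ may contain a factor $X_{1|\hat{1}}$ or $X^{*}_{1|\hat{1}}$, whose leading term involves only $x_{11}$ and $y_{\hat{1}\hat{1}}$; its exponent is recovered by applying your sign argument to the residual exponents $A$ and $B$, the exclusion condition $\hat{\mu}_{\epsilon}\neq\hat{\lambda}_{\delta}$ again ensuring that at most one of the two can occur.
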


\begin{proof}
{The standard products generate $\Tilde{\mathcal{O}}^{\mathrm{SL}(1|1)}$,
hence we only need to show that they are linearly independent.} 

Let $\mathcal{B}$ be the monomial basis of $\Tilde{\mathcal{O}}$, with expressions of the following form:
$$\Pi_{(i,j)}x_{ij}^{a_{ij}}(x_{ij}^{-1})^{b_{ij}} \Pi_{(k,l)} x_{kl}^{c_{kl}},$$
where:
\begin{itemize}
\item $x_{ij}$ and $x_{kl}$ denotes even and odd generators respectively,
\item $a_{ij}, b_{ij} \in \{0,1,2, \cdots \}$, $c_{kl} \in\{0,1\}$,
    \item for a fixed $(i,j)$ at least one of $a_{ij}$ or $b_{ij}$ is zero.
\end{itemize}
Now, consider the following ordering of tuples,
$$(1,1)< \cdots < (1,p) < (\hat{1},\hat{1})< \cdots <(\hat{1}, \hat{q})<$$
$$(1,1)^{-1}<  \cdots < (1,p)^{-1} < (\hat{1},\hat{1})^{-1}< \cdots <(\hat{1}, \hat{q})^{-1}<$$
$$(1, \hat{1})< \cdots < (1, \hat{q}) < (\hat{1},1) < \cdots < (\hat{1},p),$$
where $(i,j)^{-1}$ is just a notation for denoting the generators $x_{ij}^{-1}$. This gives us a total ordering on the basis $\mathcal{B}$ of $\Tilde{\mathcal{O}}$ as follows:
$$\Pi_{(i,j)}x_{ij}^{a_{ij}}(x_{ij}^{-1})^{b_{ij}} \Pi_{(k,l)} x_{kl}^{c_{kl}}  < \Pi_{(i,j)}x_{ij}^{a'_{ij}}(x_{ij}^{-1})^{b'_{ij}} \Pi_{(k,l)} x_{kl}^{c'_{kl}}$$
if one of the following holds:\\
\medskip

(i) $\exists$ an $(i,j)$ for which $a_{ij}\neq a'_{ij}$ and $(i,j)$ is smallest such tuple, then $a_{ij}>a'_{ij}$.\\
\medskip

(ii) $a_{ij}=a'_{ij}$ for all $(i,j)$, and $\exists$ an $(i,j)$ for which $b_{ij}\neq b'_{ij}$ and $(i,j)$ is smallest such tuple then, $b_{ij}<b'_{ij}$.\\
\medskip

(iii) $a_{ij}=a'_{ij}$ and $b_{ij}=b'_{ij}$ for all $(i,j)$, and $(k,l)$ is smallest tuple for which $c_{kl}\neq c'_{kl}$, then $c_{kl}> c'_{kl}$.\\
\medskip

With this ordering the leading terms of $P_{1|\hat{\underline{\mu}}}$, $P_{\underline{i}|\hat{1}}$, $P_{\hat{\underline{\eta}}|\hat{1}}$, $P^{*}_{1|\hat{\underline{\lambda}}}$, $P^{*}_{\underline{j}|\hat{1}}$, $P^{*}_{1|\underline{l}}$ turns out to be:
\begin{align*}
&\mathrm{Lt}(P_{1|\hat{\underline{\mu}}})=x_{11}^{d_{\underline{\mu}}}(x_{\hat{1}\hat{\mu_{1}}}^{-1})^{d_{\hat{\mu}_{1}}}...(x_{\hat{1}\hat{\mu_{\alpha}}}^{-1})^{d_{\hat{\mu}_{\alpha}}}, \qquad &\mathrm{Lt}(P_{\underline{i}|\hat{1}})=x_{1i_{1}}^{d_{i_{1}}}...x_{1i_{a}}^{d_{i_{a}}}(x_{\hat{1},\hat{1}}^{-1})^{d_{\underline{i}}},\\
&\mathrm{Lt}(P_{\hat{\underline{\eta}}|\hat{1}})=(x_{\hat{1}\hat{1}}^{-1})^{\beta}x_{1\hat{\eta}_{1}}...x_{1\hat{\eta}_{\beta}}, \qquad &\mathrm{Lt}(P^{*}_{1|\hat{\underline{\lambda}}})=x_{\hat{1}\hat{\lambda}_{1}}^{d_{\hat{\lambda}_{1}}}...x_{\hat{1}\hat{\lambda}_{\gamma}}^{d_{\hat{\lambda}_{\gamma}}}(x_{11}^{-1})^{d_{\hat{\underline{\lambda}}}},\\
&\mathrm{Lt}(P^{*}_{\underline{j}|\hat{1}})=x_{\hat{1}\hat{1}}^{d_{\underline{j}}}(x_{1j_{1}}^{-1})^{d_{j_{1}}}...(x_{1j_{1}}^{-1})^{d_{j_{b}}}, \qquad &\mathrm{Lt}(P^{*}_{1|\underline{l}})= (x_{11}^{-1})^{c}x_{\hat{1}l_{1}}...x_{\hat{1}l_{c}},
\end{align*}
where $d_{\underline{\mu}}:=d_{\mu_{1}}+ \cdots d_{\mu_{\alpha}}$ and similarly others. Moreover, the leading term $\mathrm{Lt}(P)$ of the standard generator $P=P_{1|\hat{\underline{\mu}}}P_{\underline{i}|\hat{1}}P_{\hat{\underline{\eta}}|\hat{1}}P^{*}_{1|\hat{\underline{\lambda}}}P^{*}_{\underline{j}|\hat{1}}P^{*}_{1|\underline{l}}$ is the product of these leading terms. Therefore, distinct standard generators have distinct leading terms, and, by Lemma \ref{linear-ind}, they are linearly independent.
\end{proof}
\medskip

The Lemma \ref{basis-gen} implies that the super Pl\"ucker relations are all the algebraic relations among the generators. We state this precisely in the theorem below.

\begin{theorem}
{\sl Second Fundamental Theorem of super invariant theory for $\mathrm{SL}(1|1)$.}\label{SecFT11}
We have a presentation of the ring of invariants,
\begin{eqnarray*}
\Tilde{\mathcal{O}}^{\mathrm{SL}(1|1)} \cong \mathbb{C}[Y_{i|\hat{\mu}}, Y_{\hat{\mu}| \hat{\nu}},Y_{j|\hat{\nu}}^{*},Y_{i|j}^{*}]/ \mathrm{I}_{sp}
\end{eqnarray*}
where $Y_{i|\hat{\mu}}$ and $Y_{j|\hat{\nu}}^{*}$ are even variables, $Y_{\hat{\mu}| \hat{\nu}}$ and $Y_{i|j}^{*}$ are odd variables, and $\mathrm{I}_{sp}$ is the ideal generated by super Pl\"ucker relations given in Equations (\ref{sp1}), (\ref{sp2}), (\ref{sp3}) and (\ref{sp4}).
\end{theorem}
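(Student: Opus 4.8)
The plan is to realise $\Tilde{\mathcal{O}}^{\mathrm{SL}(1|1)}$ as a quotient of a free supercommutative algebra by the super Pl\"ucker ideal, and then to certify that the resulting presentation is exact by matching a spanning set of the quotient against the vector space basis produced in Lemma \ref{basis-gen}.

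First I would fix the comparison map. Let $R:=\mathbb{C}[Y_{i|\hat{\mu}}, Y_{\hat{\mu}|\hat{\nu}}, Y^{*}_{j|\hat{\nu}}, Y^{*}_{i|j}]$ be the free supercommutative algebra on the indicated even variables $Y_{i|\hat{\mu}}, Y^{*}_{j|\hat{\nu}}$ and odd variables $Y_{\hat{\mu}|\hat{\nu}}, Y^{*}_{i|j}$, and define the superalgebra homomorphism $\phi\colon R\to \Tilde{\mathcal{O}}^{\mathrm{SL}(1|1)}$ sending each variable to the (fake) super minor carrying the same indices. The assignment respects parities, since the fake super minors $X_{\hat{\mu}|\hat{\nu}}$ and $X^{*}_{i|j}$ are odd. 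By the First Fundamental Theorem (Theorem \ref{SFFT}) these super minors generate $\Tilde{\mathcal{O}}^{\mathrm{SL}(1|1)}$, so $\phi$ is surjective; and by the super Pl\"ucker relations (Theorem \ref{super-pl-thm}) each generator of $\mathrm{I}_{sp}$ lies in $\ker\phi$. Hence $\mathrm{I}_{sp}\subseteq\ker\phi$ and $\phi$ descends to a surjection $\bar{\phi}\colon R/\mathrm{I}_{sp}\to\Tilde{\mathcal{O}}^{\mathrm{SL}(1|1)}$; the entire content of the theorem is that $\bar\phi$ is injective, i.e. $\ker\phi=\mathrm{I}_{sp}$.

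For injectivity I would first show that the images in $R/\mathrm{I}_{sp}$ of the standard products of Definition \ref{standard-gen-def} span $R/\mathrm{I}_{sp}$. This is exactly the straightening already carried out at the level of invariants: the rewriting rules (\ref{spp1})--(\ref{spp4}) are algebraic consequences of (\ref{sp1})--(\ref{sp4}), hence valid identities in $R/\mathrm{I}_{sp}$, and they express each variable $Y_{i|\hat{\mu}}$, $Y_{\hat{\eta}|\hat{\nu}}$, $Y^{*}_{j|\hat{\lambda}}$, $Y^{*}_{k|l}$ as a combination of standard expressions. Substituting these repeatedly and reordering factors by supercommutativity turns any monomial of $R$, modulo $\mathrm{I}_{sp}$, into a $\mathbb{C}$-linear combination of standard products as in (\ref{standard}), subject to the index restrictions recorded after (\ref{standard}). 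Thus the standard products span $R/\mathrm{I}_{sp}$.

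I would then combine this with Lemma \ref{basis-gen}. Under $\bar{\phi}$ the standard products of $R/\mathrm{I}_{sp}$ map to the corresponding standard products in $\Tilde{\mathcal{O}}^{\mathrm{SL}(1|1)}$, which are linearly independent by Lemma \ref{basis-gen}. Given $\xi\in R/\mathrm{I}_{sp}$ with $\bar{\phi}(\xi)=0$, I write $\xi$ as a linear combination of standard products; applying $\bar\phi$ gives the same combination of the independent standard products in the invariant ring, so every coefficient vanishes and $\xi=0$. Therefore $\bar\phi$ is injective, and being already surjective it is the desired isomorphism. The main obstacle is the middle step: one must be sure that the substitutions (\ref{spp1})--(\ref{spp4}), together with supercommutativity, terminate and deliver a genuine normal form, leaving no non-standard factor and introducing no relation beyond $\mathrm{I}_{sp}$ — it is precisely the index conditions $\hat{\mu}_{\epsilon}\neq\hat{\lambda}_{\delta}$ and $i_{m}\neq j_{n}$ that pin down the normal form so that its images coincide with the basis furnished by Lemma \ref{basis-gen}.
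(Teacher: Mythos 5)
Your proposal is correct and follows essentially the same route as the paper: the paper likewise defines the surjection $\pi$ sending $Y$'s to $X$'s, observes $\mathrm{I}_{sp}\subset\ker\pi$, writes any $f\in\ker\pi$ as $\sum_i a_i P_i(Y)+Q$ with $Q\in\mathrm{I}_{sp}$ via the straightening identities (\ref{spp1})--(\ref{spp4}), and concludes $a_i=0$ from the linear independence of standard products in Lemma \ref{basis-gen}. Your only addition is to make explicit that the spanning step takes place in $R/\mathrm{I}_{sp}$ because the rewriting rules are consequences of the ideal generators, a point the paper leaves implicit but which is the same argument.
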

\begin{proof}
Define,
\begin{eqnarray}
 &\pi : \mathbb{C}[Y_{i|\hat{\mu}}, Y_{\hat{\mu}| \hat{\nu}},Y_{j|\hat{\nu}}^{*},Y_{i|j}^{*}] \longrightarrow \Tilde{\mathcal{O}}^{\mathrm{SL}(1|1)}& \nonumber\\
 &Y_{i|\hat{\mu}}, Y_{\hat{\mu}| \hat{\nu}},Y_{j|\hat{\nu}}^{*},Y_{i|j}^{*} \mapsto X_{i|\hat{\mu}}, X_{\hat{\mu}| \hat{\nu}},X_{j|\hat{\nu}}^{*},X_{i|j}^{*}.&
\end{eqnarray}
Then, clearly, $\mathrm{I}_{sp} \subset \mathrm{ker} (\pi)$.
We need to show that, in fact, $\mathrm{I}_{sp} = \mathrm{ker} (\pi)$.

{ Assume $f \in \mathrm{ker} (\pi)$. By
Lemma \ref{basis-gen} we can write:
$$
f=  \sum_i a_{i} P_{i}(Y)+Q
$$
where the $P_i(Y)$ are standard products in $Y$'s and $Q\in \mathrm{I}_{sp}$. We have
$$
\pi(f)=\sum_i a_i P_i(X)=0
$$
where $P_i(X)$ are standard products in $X$'s. By Lemma \ref{basis-gen}, we have $a_i=0$
for all $i$, so that $f=Q\in \mathrm{I}_{sp}$.
}
\end{proof}

\section{Super Pl\"ucker relations for $\mathrm{SL(r|s)}$}
\label{sft-sec}

In this section, we prove one of the main results of our paper: the generalization
to the super setting of the Pl\"ucker relations for $\mathrm{SL(r|s)}$, expressed
for the ordinary setting in Thm. \ref{fsft}. Again, it is important to note that these super Pl\"ucker relations in the general case also coincides with the relations appeared in \cite[Th. 9]{Voronov} obtained with a different approach.

\begin{theorem}
\label{super-pl}
{Let the notation be as in (\ref{superminor1}), (\ref{superminor2}) and (\ref{superminor3}).
Then,}
\begin{eqnarray}
X_{\underline{i}|\underline{\hat{\mu}}}X^{*}_{\underline{i}|\underline{\hat{\mu}}}&=& 1 \label{gsp1}
\end{eqnarray}

\begin{eqnarray}
(X_{\underline{i}|\underline{\hat{\mu}}})^{r+s-1}X_{\underline{j}|\underline{\hat{\nu}}} &=& \Ber \begin{pmatrix}
     X_{\underline{i}_{a}(j_{t})|\underline{\hat{\mu}}}  &\vline& X_{\underline{i}_{a}(\nu_{\beta})|\underline{\hat{\mu}}}\\
        \hline
       X^{*}_{\underline{i}|\underline{\hat{\mu}}_{\alpha}(j_{t})}  &\vline& X^{*}_{\underline{i}|\underline{\hat{\mu}}_{\alpha}(\nu_{\beta})}
\end{pmatrix}_{\substack{a,t=1,...,r \\ \alpha, \beta =1,...,s}} \label{gsp2}
\end{eqnarray}

\begin{eqnarray}
(X_{\underline{i}|\underline{\hat{\mu}}})^{r+s-1}X_{j_{1}...j_{r-1} \hat{\lambda} | \underline{\hat{\nu}}}&=& \Ber \begin{pmatrix}
     X_{\underline{i}_{a}(j_{t})|\underline{\hat{\mu}}} & X_{\underline{i}_{a}(\hat{\lambda})|\underline{\hat{\mu}}} &\vline& X_{\underline{i}_{a}(\nu_{\beta})|\underline{\hat{\mu}}}\\
        \hline
       X^{*}_{\underline{i}|\underline{\hat{\mu}}_{\alpha}(j_{t})} & X^{*}_{\underline{i}|\underline{\hat{\mu}}_{\alpha}(\hat{\lambda})}  &\vline& X^{*}_{\underline{i}|\underline{\hat{\mu}}_{\alpha}(\nu_{\beta})}
\end{pmatrix}_{\substack{a=1,...,r \\ t=1,...,r-1\\ \alpha, \beta =1,...,s}} \label{gsp3}\\
(X^{*}_{\underline{i}|\underline{\hat{\mu}}})^{-r-s+1}X^{*}_{\underline{j}|\nu_{1}...\nu_{s-1}y}&=& \Ber^{*}  \begin{pmatrix}
     X_{\underline{i}_{a}(j_{t})|\underline{\hat{\mu}}}  &\vline& X_{\underline{i}_{a}(\nu_{\beta})|\underline{\hat{\mu}}} & X_{\underline{i}_{a}(y)|\underline{\hat{\mu}}}\\
        \hline
       X^{*}_{\underline{i}|\underline{\hat{\mu}}_{\alpha}(j_{t})}  &\vline& X^{*}_{\underline{i}|\underline{\hat{\mu}}_{\alpha}(\nu_{\beta})} & X^{*}_{\underline{i}|\underline{\hat{\mu}}_{\alpha}(y)}
\end{pmatrix}_{\substack{a,t=1,...,r \\ \alpha=1,...,s \\ \beta =1,...,s-1}} \label{gsp4}
\end{eqnarray}
where $\underline{i}_{a}(j_{t})|\underline{\mu}$ denotes replacing the a-th index of $\underline{i}=i_{1},...,i_{r}$ with $j_{t}$ and similarly others. We call these the \textit{super Pl\"ucker relations} for $\mathrm{SL}(r|s)$.
\end{theorem}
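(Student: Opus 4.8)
The plan is to imitate, block for block, the derivation of the $\mathrm{SL}(1|1)$ relations in Theorem \ref{super-pl-thm}, with scalars promoted to $r\times r$ and $s\times s$ blocks and with the super Jacobi identity (Theorem \ref{superjacobi}) and super Cramer's rule (Theorem \ref{supercramer}) doing the work. Relation (\ref{gsp1}) is immediate from (\ref{ob}): $X_{\underline{i}|\underline{\hat{\mu}}}X^{*}_{\underline{i}|\underline{\hat{\mu}}}=\Ber A_{\underline{i}|\underline{\hat{\mu}}}\,\Ber A_{\underline{i}|\underline{\hat{\mu}}}^{-1}=1$, and I will use this repeatedly to trade $X^{*}_{\underline{i}|\underline{\hat{\mu}}}$ for $X_{\underline{i}|\underline{\hat{\mu}}}^{-1}$.

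For (\ref{gsp2}) I would build a $2r|2s\times 2r|2s$ supermatrix $B$ whose first $r$ even and first $s$ odd columns carry the columns $(i_{1},\dots,i_{r}|\hat{\mu}_{1},\dots,\hat{\mu}_{s})$ of $A$, whose next $r$ even and $s$ odd columns carry $(j_{1},\dots,j_{r}|\hat{\nu}_{1},\dots,\hat{\nu}_{s})$, and whose last $r$ even and last $s$ odd rows form identity blocks sitting beneath the $\underline{j}$- and $\underline{\hat{\nu}}$-columns (the generalization of the $0,1$ padding rows used in (\ref{SFT11})). The block-triangular shape of $B$ then gives, exactly as in (\ref{SFT11})--(\ref{SFT12}), that $\Ber B=X_{\underline{i}|\underline{\hat{\mu}}}$ and that, with $u=(r+1,\dots,2r|\widehat{s+1},\dots,\widehat{2s})$ and $v=(1,\dots,r|\hat{1},\dots,\hat{s})$, one has $\Ber B^{u}_{v}=X_{\underline{j}|\underline{\hat{\nu}}}$.

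The substantive step is computing $(B^{-1})^{\tilde{v}}_{\tilde{u}}$, the $r|s\times r|s$ block of $B^{-1}$ on rows $(1,\dots,r|\hat{1},\dots,\hat{s})$ and columns $(r+1,\dots,2r|\widehat{s+1},\dots,\widehat{2s})$. Applying super Cramer's rule one column at a time (replacing the relevant even columns of $B$ by standard vectors and reading off the even components as ratios $\Ber(\cdot)/\Ber B$, the odd ones as $\Ber^{*}(\cdot)/\Ber^{*}B$), I expect every entry in the even rows to carry the common factor $X^{*}_{\underline{i}|\underline{\hat{\mu}}}$ and every entry in the odd rows the common factor $X_{\underline{i}|\underline{\hat{\mu}}}$, the cofactors being precisely the super minors and fake super minors $X_{\underline{i}_{a}(j_{t})|\underline{\hat{\mu}}}$, $X_{\underline{i}_{a}(\nu_{\beta})|\underline{\hat{\mu}}}$, $X^{*}_{\underline{i}|\underline{\hat{\mu}}_{\alpha}(j_{t})}$, $X^{*}_{\underline{i}|\underline{\hat{\mu}}_{\alpha}(\nu_{\beta})}$ displayed on the right of (\ref{gsp2}). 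Pulling these scalars out contributes $c^{r}d^{-s}=(X^{*}_{\underline{i}|\underline{\hat{\mu}}})^{r}(X_{\underline{i}|\underline{\hat{\mu}}})^{-s}=(X^{*}_{\underline{i}|\underline{\hat{\mu}}})^{r+s}$, where $c$ (resp.\ $d$) scales the even (resp.\ odd) rows and the scaling law for the Berezinian gives the factor $c^{r}d^{-s}$; substituting this into (\ref{SJacobi}) and using (\ref{gsp1}) produces $(X_{\underline{i}|\underline{\hat{\mu}}})^{r+s-1}X_{\underline{j}|\underline{\hat{\nu}}}=\Ber[\,\cdots\,]$. The main obstacle is precisely this bookkeeping: matching each column-replaced Berezinian of $B$ with the correct fake super minor, and verifying that the sign $(-1)^{t}$ of (\ref{SJacobi}), now with $t=r(2r+1)+s(2s+1)\equiv r+s \pmod 2$, combines with the signs of the Cramer entries to leave the clean, sign-free relation (\ref{gsp2}); in the $\mathrm{SL}(1|1)$ case $(-1)^{t}=1$, so this cancellation is invisible there but must be checked in general.

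Finally, (\ref{gsp3}) and (\ref{gsp4}) carry a fake super minor on the left, so (\ref{SJacobi}) cannot be invoked directly. As in the passage from (\ref{sp2}) to (\ref{sp3}) and (\ref{sp4}), I would keep the same $B$ but replace the last $\underline{j}$-column (resp.\ the last $\underline{\hat{\nu}}$-column) by the insertion vector $B^{-1}t$, with $t$ assembled from the odd column $\hat{\lambda}$ of $A$ (resp.\ from the even column $y$), manufacturing a fake supermatrix $T$ of type~I (resp.\ type~II) for which $\Ber(BT)=X_{j_{1}\cdots j_{r-1}\hat{\lambda}|\underline{\hat{\nu}}}$ (resp.\ $\Ber^{*}(BT)=X^{*}_{\underline{j}|\nu_{1}\cdots\nu_{s-1}y}$). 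Computing $T$ explicitly by super Cramer's rule and factoring as above then yields (\ref{gsp3}) and (\ref{gsp4}), and the stated agreement with \cite[Th.~9]{Voronov} follows by comparing the resulting $r|s\times r|s$ Berezinians entry by entry.
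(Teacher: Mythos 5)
Your proposal is correct and follows essentially the same route as the paper's own proof: the same $2r|2s\times 2r|2s$ padded supermatrix with $\Ber B=X_{\underline{i}|\underline{\hat{\mu}}}$ and $\Ber B^{u}_{v}=X_{\underline{j}|\underline{\hat{\nu}}}$, the entries of $(B^{-1})^{\tilde{v}}_{\tilde{u}}$ computed via super Cramer's rule, substitution into the super Jacobi identity for (\ref{gsp2}), and the fake-supermatrix $T$ construction (imitating the $\mathrm{SL}(1|1)$ argument) for (\ref{gsp3}) and (\ref{gsp4}). Your explicit tracking of the row-scaling factor $(-1)^{r+s}(X^{*}_{\underline{i}|\underline{\hat{\mu}}})^{r+s}$ against the Jacobi sign $(-1)^{t}$ with $t\equiv r+s \pmod 2$ is exactly the cancellation the paper leaves implicit, and it checks out.
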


\begin{proof}
  Consider the ordered multi-indices $\underline{i}|\underline{\hat{\mu}}=(i_{1},...,i_{r}|\hat{\mu}_{1},...,\hat{\mu}_{s})$ and $\underline{j}|\underline{\hat{\nu}}=(j_{1},...,j_{r}|\hat{\nu}_{1},...,\hat{\nu}_{s})$ taken from $(1,\dots,p|\hat{1},\dots,\hat{q})$. We fix the following matrix of size $(2r|2s) \times (2r|2s)$:
\begin{eqnarray*}
A=\begin{pmatrix}
  A_{1} & A_{2} &\vline& A_{3} & A_{4}\\
   O   & I_{r} &\vline& O & O\\
   \hline
  A_{5} & A_{6} &\vline& A_{7} & A_{8}\\
  O   & O &\vline& O & I_{s}\\
\end{pmatrix}.
\end{eqnarray*}
where $O$ and $I$ denote the null and identity matrices, respectively, and
\begin{align*}
&A_{1}=\begin{bmatrix}
        x_{ki_{l}}
    \end{bmatrix}\qquad &A_{2}&=\begin{bmatrix}
        x_{kj_{l}}
    \end{bmatrix} \qquad
    &A_{3}=\begin{bmatrix}
        x_{k\hat{\mu}_{n}}
    \end{bmatrix} \qquad &A_{4}&=\begin{bmatrix}
        x_{k\hat{\nu}_{n}}
    \end{bmatrix}  \\
    &A_{5}=\begin{bmatrix}
        x_{\hat{m}i_{l}}
    \end{bmatrix} \qquad &A_{6}&=\begin{bmatrix}
        x_{\hat{m}j_{l}}
    \end{bmatrix} \qquad
    &A_{7}=\begin{bmatrix}
        x_{\hat{m}\hat{\mu}_{n}}
    \end{bmatrix} \qquad &A_{8}&=\begin{bmatrix}
        x_{\hat{m}\hat{\nu}_{n}}
    \end{bmatrix}
\end{align*}
where $k,l=1,...,r$ and $m,n=1,...,s$. Then, clearly,
\begin{eqnarray}
\Ber(A)=X_{\underline{i}|\underline{\hat{\mu}}}.    \label{gs1}
\end{eqnarray}
Moreover, by setting,
\begin{eqnarray*}
u=(r+1,\hdots ,2r|\hat{s+1},\hdots,\hat{2s}) \quad \text{and} \quad v=(1,\hdots, r| \hat{1},\hdots \hat{s})
\end{eqnarray*}
we get:
\begin{eqnarray}
\Ber(A^{u}_{v})=\Ber \begin{pmatrix}
A_{2} &\vline& A_{4}\\
\hline
A_{6} &\vline& A_{8}
\end{pmatrix}=X_{\underline{j}|\underline{\hat{\nu}}}. \label{gs2}
\end{eqnarray}
On the other hand:
\begin{eqnarray*}
(A^{-1})^{\Tilde{v}}_{\Tilde{u}}= \begin{pmatrix}
        (A^{-1})_{ab} &\vline& (A^{-1})_{a\hat{d}}\\
        \hline
        (A^{-1})_{\hat{c}b} &\vline& (A^{-1})_{\hat{c}\hat{d}}
\end{pmatrix}
\end{eqnarray*}
where $a=1,...,r,$ $b=r+1,...,2r,$ $c=1,...,s$ and $d=s+1,...,2s$.\\

These entries of $A^{-1}$ can be computed using the super Cramer rule. For instance,
\begin{eqnarray*}
(A^{-1})_{ab}=\dfrac{\Ber(A_{a}(e_{b}))}{\Ber(A)}=-X_{\underline{i}_{a}(j_{t})|\underline{\hat{\mu}}}X^{*}_{\underline{i}|\underline{\hat{\mu}}}
\end{eqnarray*}
where $t=b-r$.

Similarly, the other entries are obtained. We get:
\begin{eqnarray}
(A^{-1})^{\Tilde{v}}_{\Tilde{u}}= \begin{pmatrix}
     -X_{\underline{i}_{a}(j_{t})|\underline{\hat{\mu}}}X^{*}_{\underline{i}|\underline{\hat{\mu}}}    &\vline& -X_{\underline{i}_{a}(\nu_{\beta})|\underline{\hat{\mu}}}X^{*}_{\underline{i}|\underline{\hat{\mu}}}\\
        \hline
       -X^{*}_{\underline{i}|\underline{\hat{\mu}}_{\alpha}(j_{t})}X_{\underline{i}|\underline{\hat{\mu}}}  &\vline& -X^{*}_{\underline{i}|\underline{\hat{\mu}}_{\alpha}(\nu_{\beta})}X_{\underline{i}|\underline{\hat{\mu}}}
\end{pmatrix}_{\substack{a,t=1,...,r \\ \alpha, \beta =1,...,s}} \label{gs3}
\end{eqnarray}

Substituting Equations. (\ref{gs1}), (\ref{gs2}) and (\ref{gs3}) into the super Jacobi identity, we get relations in Equation (\ref{gsp2}). Similarly, the other super Pl\"ucker relations in Eqs. (\ref{gsp3}) and (\ref{gsp4}) are obtained by modifying the strategy used in the proof of super Jacobi identity, as we explicitly presented in the previous section for the case of $\mathrm{SL}(1|1)$.
\end{proof}

{We end this section by stating a conjecture.}
{\begin{conjecture}
$\Tilde{\mathcal{O}}^{\mathrm{SL}(r|s)}$ is isomorphic to the superalgebra generated by  $Y_{\underline{i}|\underline{\hat{\mu}}}$, $Y_{\underline{i}_{a}(\hat{\nu})|\underline{\hat{\mu}}}$, $Y^{*}_{\underline{i}|\underline{\hat{\mu}}_{\alpha}(j)}$ and $Y^{*}_{\underline{i}|\underline{\hat{\mu}}}$ subject to the super Pl\"ucker relations given in Equations (\ref{gsp1}), (\ref{gsp2}), (\ref{gsp3}) and (\ref{gsp4}).
\end{conjecture}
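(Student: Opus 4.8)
The plan is to imitate, for general $r|s$, the argument by which we established the Second Fundamental Theorem for $\mathrm{SL}(1|1)$ (Theorem \ref{SecFT11}). By the First Fundamental Theorem (Theorem \ref{SFFT}) the ring $\Tilde{\mathcal{O}}^{\mathrm{SL}(r|s)}$ is generated by the super minors $X_{\underline{i}|\underline{\hat{\mu}}}$, $X^{*}_{\underline{i}|\underline{\hat{\mu}}}$ and the fake super minors $X_{\underline{i}_{a}(\hat{\nu})|\underline{\hat{\mu}}}$, $X^{*}_{\underline{i}|\underline{\hat{\mu}}_{\alpha}(j)}$, so sending each $Y$ to the corresponding $X$ defines a surjection $\pi$ from the free supercommutative polynomial algebra on these even and odd variables onto $\Tilde{\mathcal{O}}^{\mathrm{SL}(r|s)}$. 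The relations (\ref{gsp1})--(\ref{gsp4}) hold in $\Tilde{\mathcal{O}}^{\mathrm{SL}(r|s)}$ by Theorem \ref{super-pl}, so the ideal $\mathrm{I}_{sp}$ they generate satisfies $\mathrm{I}_{sp}\subseteq\ker\pi$; the entire content of the conjecture is the reverse inclusion $\ker\pi\subseteq\mathrm{I}_{sp}$. As for $\mathrm{SL}(1|1)$, this reduces to producing a spanning set of \emph{standard products} in the $X$'s and proving that it is linearly independent.

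First I would single out the invertible super minor $X_{1\cdots r|\hat{1}\cdots\hat{s}}$ and its dual $X^{*}_{1\cdots r|\hat{1}\cdots\hat{s}}$, inverse to it by (\ref{gsp1}), and use (\ref{gsp2})--(\ref{gsp4}) to \emph{straighten} an arbitrary monomial in the generators. Each of these relations expresses a power of a super minor times a further (possibly fake) super minor as the Berezinian of an $(r|s)\times(r|s)$ matrix whose entries are super minors in which one index block has been normalised to $1\cdots r|\hat{1}\cdots\hat{s}$. Expanding those Berezinians and iterating should rewrite every generator, and hence every monomial, as a linear combination of \emph{standard products}: products of a restricted family of \emph{standard expressions}, in which all but one even (respectively odd) index is pinned to the distinguished block, together with a power of $X_{1\cdots r|\hat{1}\cdots\hat{s}}^{\pm 1}$. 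This is the direct analogue of the passage (\ref{spp1})--(\ref{spp4}) and Definition \ref{standard-gen-def}, and one must check, as there, that the rewriting terminates, i.e. that it strictly decreases a suitable well-founded measure on monomials.

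Linear independence of the standard products would then follow by the leading-term method of Lemma \ref{basis-gen}. I would equip the localisation $\Tilde{\mathcal{O}}$ with its monomial basis and fix a total order on the generators $x_{ij}$, $x_{ij}^{-1}$ refining the one used in Lemma \ref{basis-gen}, arranged so that the generators attached to the distinguished block come first. With respect to the induced term order, the leading monomial $\mathrm{Lt}(P)$ of a standard product $P$ is the product of the leading monomials of its standard factors, and one verifies that $P\mapsto\mathrm{Lt}(P)$ is injective; Lemma \ref{linear-ind} then yields linear independence. Hence the standard products form a basis of $\Tilde{\mathcal{O}}^{\mathrm{SL}(r|s)}$, and the argument of Theorem \ref{SecFT11} applies verbatim: writing $f\in\ker\pi$ as $\sum_{i}a_{i}P_{i}(Y)+Q$ with $Q\in\mathrm{I}_{sp}$ and applying $\pi$ forces all $a_{i}=0$, so $f=Q\in\mathrm{I}_{sp}$.

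The hard part, and presumably the reason we record this only as a conjecture, is the straightening step for general $r,s$. For $\mathrm{SL}(1|1)$ each super minor carries a single even and a single odd index and the relations (\ref{sp2})--(\ref{sp4}) are scalar, whereas (\ref{gsp2})--(\ref{gsp4}) equate such a product to a full $(r|s)\times(r|s)$ Berezinian of super minors, whose expansion is a large signed sum. Controlling the signs and guaranteeing that repeated substitution converges to a canonical normal form amounts to a genuine \emph{super straightening algorithm} in the spirit of \cite{br, rota, brini}; establishing its termination, together with the injectivity of $P\mapsto\mathrm{Lt}(P)$ uniformly in $r$ and $s$ rather than in the single-index case handled for $\mathrm{SL}(1|1)$, is the essential obstacle.
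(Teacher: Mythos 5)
This statement is left as an open conjecture in the paper; the authors prove the analogous result only for $\mathrm{SL}(1|1)$ (Theorem \ref{SecFT11}) and explicitly decline to claim it for general $r|s$. Your proposal is therefore not competing with a proof in the paper, and it is, by its own admission, not a proof either: it is a plan whose decisive step --- the straightening of an arbitrary monomial in the (fake) super minors into \emph{standard products} --- is never carried out. For $\mathrm{SL}(1|1)$ that step rests on the four explicit identities (\ref{spp1})--(\ref{spp4}), on the precise combinatorial definition of standard expressions (Definition \ref{standard-gen-def}), and on the explicit computation of the leading terms of all six families of standard factors in Lemma \ref{basis-gen}. You produce none of the corresponding objects for general $r|s$: no analogue of (\ref{spp1})--(\ref{spp4}), no definition of the standard products beyond ``all but one index pinned to the distinguished block,'' no well-founded measure for termination, and no verification that leading monomials of distinct standard products are distinct (in the $1|1$ case this injectivity is checked by hand and is not automatic, since leading terms of factors could in principle interact). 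Identifying the obstacle is not the same as overcoming it, so the argument has a genuine, and central, gap.

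There is also a concrete technical point that your sketch passes over too quickly. You propose to ``expand those Berezinians'' on the right-hand sides of (\ref{gsp2})--(\ref{gsp4}) as if they were determinants, i.e.\ ``a large signed sum.'' But a Berezinian is not a polynomial in the matrix entries: it requires inverting the odd--odd block, here an $s\times s$ matrix whose entries are the fake super minors $X^{*}_{\underline{i}|\underline{\hat{\mu}}_{\alpha}(\nu_{\beta})}$. For $r=s=1$ this block is a single even generator whose inverse is supplied directly by relation (\ref{sp1}), which is why the $\mathrm{SL}(1|1)$ relations become polynomial and the ideal $\mathrm{I}_{sp}$ is well defined in the free superalgebra. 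For general $r,s$ one must first make sense of these inverses inside the quotient (by localizing, or by clearing denominators with suitable powers of invertible minors), and only then can one speak of a rewriting system, of the ideal generated by the relations, or of termination. Until that is done, both the definition of $\mathrm{I}_{sp}$ as you use it and the straightening procedure remain unspecified, and the conjecture stands.
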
}

\section*{Acknowledgments}
We thank Prof. Shemyakova and Prof. Voronov for helpful
discussions. R. Fioresi and J. Razzaq want to thank the Departament de F\'{\i}sica Te\`{o}rica,
Universitat de Val\`{e}ncia and the  Department of Mathematics, LUMS
for the hospitality while part of the work
was done. M. A. Lled\'{o} wants to thank Fabit, Universit\`a di
Bologna, for its kind hospitality. This research was supported by GNSAGA-Indam,
PNRR MNESYS, PNRR National Center
for HPC, Big Data and Quantum Computing CUP J33C22001170001, PNNR SIMQuSEC
CUP J13C22000680006. This work is also supported by the Spanish Grants PID2020-116567GB-C21, PID2023-001292-S and CEX2023-001292-S
funded by MCIU/AEI/10.13039/501100011033.
This work was supported by Horizon Europe EU projects MSCA-SE CaLIGOLA, Project ID: 101086123,
MSCA-DN CaLiForNIA, Project ID: 101119552.

\hyphenation{hos-pi-ta-li-ty}

This article is based upon work from COST Action CaLISTA CA21109 supported by COST (European Cooperation in Science and Technology), www.cost.eu.

\section*{Declaration}
This work is pure mathematical and does not have associated data. Moreover, the authors declare no conflict of interest.

\appendix
\section{Supergeometry} \label{super-app} 

We briefly recall some notions regarding supergroups, superspaces
and their $\mathcal{A}$-points. For more details see, for example, Ref.  \cite{ccf},  Chapters 1 and   3 and \cite{fl}, Chapter 1.

Let $\mathbf{k}$ denote the ground field ($\mathbf{k}=\R,\C$). A
\textit{super vector space} is a $\Z_2$-graded
vector space over $\mathbf{k}$ $V=V_0 \oplus V_1$.
We call the pair of numbers $\dim V_0|\dim V_1$ the \textit{super dimension} of $V$ and we define
$\mathbf{k}^{m|n}:=\mathbf{k}^m \oplus \mathbf{k}^n$.

\medskip
We say that $\mathcal{A}$ is a \textit{commutative superalgebra}
if it is a $\Z_2$-graded algebra,
with grading preserving multiplication and
$ab=(-1)^{p(a)p(b)}ba$, for all (homogeneous) $a,b \in \mathcal{A}$, being $p(a), p(b)=0,1$ according to the grading ($p(a)$ is the {\it parity} of an element $a\in \mathcal{A}$).
An important example of commutative superalgebra
is the \textit{superalgebra of polynomials}:
$$
\mathbf{k}[x_1, \dots, x_m, \xi_1, \dots ,\xi_n]:= k[x_1, \dots ,x_m]\otimes
\wedge( \xi_1, \dots ,\xi_n)
$$
where  Latin letters denote the {\it  even} (commuting) variables,
while Greek letters denote the {\it  odd} (anticommuting) ones.

\medskip
For a commutative superalgebra $\mathcal{A}$, we define the free $\mathcal{A}$-module $\mathcal{A}^{m|n}=\mathcal{A} \otimes k^{m|n}$, where
$$
\mathcal{A}^{m|n}_0:=\mathcal{A}_0 \otimes \mathbf{k}^m \oplus \mathcal{A}_1 \otimes \mathbf{k}^n \quad\hbox{and}\quad
\mathcal{A}^{m|n}_1:=\mathcal{A}_0 \otimes \mathbf{k}^n \oplus \mathcal{A}_1 \otimes \mathbf{k}^m
$$
are, respectively, its even and odd parts.

\medskip
Given a commutative superalgebra $\mathcal{A}$,
the \textit{$\mathcal{A}$-points} of the supervector space $V=V_0\oplus V_1$ are:
$$
V(\mathcal{A})=(\mathcal{A} \otimes V)_0 =\mathcal{A}_0 \otimes V_0+\mathcal{A}_1 \otimes V_1
$$
We call the elements in $(\mathcal{A} \otimes V)_0$ {\it even vectors}, while the elements
in $(\mathcal{A} \otimes V)_1$ are {\it odd vectors}.

For example, the $\mathcal{A}$-points of the supervector space $k^{m|n}$ are:
\begin{equation}\label{fopt-svs}
k^{m|n}(\mathcal{A}):=\mathcal{A}_0 \otimes k^m \oplus \mathcal{A}_1 \otimes k^n=
\left\{(a_1, \dots, a_m,\alpha_1,\dots , \alpha_n)\right\}
\end{equation}
Hence $(a_1, \dots, a_m,\alpha_1,\dots , \alpha_n)$ is an even vector while
$(\beta_1, \dots ,\beta_m,b_1,\dots , b_n)$ is an odd one.

\begin{observation}\label{fopt-yo}
We equivalently interpret the $\mathcal{A}$-points as follows:

\begin{align*}
\mathbf{k}^{m|n}(\mathcal{A})&=
\Hom_\mathrm{(salg)}(\mathbf{k}[x_1, \dots ,x_m, \xi_1, \dots ,\xi_n],\mathcal{A})
\end{align*}
where $\mathrm{(salg)}$ denotes the
category of commutative superalgebras. In fact, an expression\hyphenation{ge-ne-ra-tors}
as (\ref{fopt-svs}), gives a morphism $f$ by specifying the images
of the generators $x_i$'s and $\xi_j$'s: $f(x_i)=a_i$, $f(\xi_j)=\alpha_j$
and vice-versa. We say, with an abuse of language, that
$\mathbf{\mathbf{k}}[x_1, \dots ,x_m, \xi_1, \dots ,\xi_n]$ is the superalgebra of `functions'
on the vector superspace $k^{m|n}$. To be precise, $\mathbf{k}[x_1, \dots, x_m, \xi_1, \dots, \xi_n]$
represents $\mathbf{k}^{m|n}$ when we adopt the language of the {\it functor of points},
that we shall not pursue here. The above superalgebra, $\mathbf{k}[x_1, \dots ,x_m, \xi_1 \dots \xi_n]$,
can be seen also as the superalgebra of global sections of the structural sheaf on the
supervariety $\mathbf{k}^{m|n}$. We shall not go further into the theory, the reader can see, for example, \cite{ccf}, Chapter 3.
\end{observation}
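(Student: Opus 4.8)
The plan is to exhibit a natural bijection between the set $\mathbf{k}^{m|n}(\cA)=\cA_0 \otimes \mathbf{k}^m \oplus \cA_1 \otimes \mathbf{k}^n$ and the hom-set $\Hom_{\mathrm{(salg)}}(\mathbf{k}[x_1,\dots,x_m,\xi_1,\dots,\xi_n],\cA)$, by invoking the universal property of the polynomial superalgebra as the \emph{free commutative superalgebra} on $m$ even and $n$ odd generators. Concretely, I would construct maps in both directions and check that they are mutually inverse and natural in $\cA$.

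For the forward direction, given an even vector $(a_1,\dots,a_m,\alpha_1,\dots,\alpha_n)$ with $a_i \in \cA_0$ and $\alpha_j \in \cA_1$, I define $f$ on generators by $f(x_i)=a_i$ and $f(\xi_j)=\alpha_j$ and extend multiplicatively. The only thing to verify is that $f$ respects the defining relations of the source. Since $\mathbf{k}[x_1,\dots,x_m,\xi_1,\dots,\xi_n]=\mathbf{k}[x_1,\dots,x_m]\otimes\wedge(\xi_1,\dots,\xi_n)$, these relations are precisely $x_i x_k = x_k x_i$, $x_i \xi_j = \xi_j x_i$, and $\xi_j \xi_l = -\xi_l \xi_j$ (so in particular $\xi_j^2=0$). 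The super-commutativity axiom $ab=(-1)^{p(a)p(b)}ba$ of $\cA$ delivers exactly these identities for the images: $a_i a_k = a_k a_i$ and $a_i \alpha_j = \alpha_j a_i$ because in each case at least one factor is even, while $\alpha_j \alpha_l = -\alpha_l \alpha_j$ because both factors are odd. Thus $f$ is a well-defined, parity-preserving superalgebra homomorphism.

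For the reverse direction, given $f\in\Hom_{\mathrm{(salg)}}(\mathbf{k}[x_1,\dots,x_m,\xi_1,\dots,\xi_n],\cA)$, the parity-preservation of $f$ forces $f(x_i)\in\cA_0$ and $f(\xi_j)\in\cA_1$, so setting $a_i:=f(x_i)$ and $\alpha_j:=f(\xi_j)$ recovers an element of $\mathbf{k}^{m|n}(\cA)$. Composing the two constructions in either order returns the original data, since any homomorphism out of a polynomial superalgebra is determined by its values on generators. I would then record that the bijection is natural in $\cA$: a morphism $\cA\to\cA'$ of commutative superalgebras induces compatible maps on both sides, upgrading the correspondence to an isomorphism of functors $\mathrm{(salg)}\to\mathrm{(sets)}$, which is the precise content of the functor-of-points statement.

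I do not expect a genuine obstacle here; the substance is the universal property of the free commutative superalgebra, and the argument is a routine verification. The only points requiring a moment's care are, first, that parity-preservation of $f$ is \emph{equivalent} to sending the even (resp. odd) generators into $\cA_0$ (resp. $\cA_1$), so that the two descriptions of an $\cA$-point genuinely match; and second, the vanishing $\alpha_j^2=0$, which follows from the anticommutativity $\alpha_j^2=-\alpha_j^2$ under the standing assumption $\mathrm{char}\,\mathbf{k}\neq 2$, ensuring that the exterior-algebra relations transport correctly across $f$.
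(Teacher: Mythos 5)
Your proposal is correct and follows essentially the same route as the paper, which justifies the Observation precisely by noting that an even vector $(a_1,\dots,a_m,\alpha_1,\dots,\alpha_n)$ determines a morphism via $f(x_i)=a_i$, $f(\xi_j)=\alpha_j$ and vice versa; you simply make explicit the underlying universal property of the free commutative superalgebra, the parity bookkeeping, and the naturality in $\mathcal{A}$. The only detail worth noting is that $\alpha_j^2=0$ is automatic here since the paper's ground field is $\R$ or $\C$, so your characteristic caveat is satisfied.
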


We now take into exam a special case,
important in our treatment.
We define the super vector
space of \textit{supermatrices} as:

\begin{align*}
&\rM_{r|s\times p|q}=\left\{ \begin{pmatrix} X & \vline & Y \\ \hline Z & \vline & W \end{pmatrix}
\right\}, \qquad \hbox{with}\\[0.3cm]
&(\rM_{r|s\times p|q})_0=\left\{ \begin{pmatrix} X & \vline& 0 \\ \hline 0 & \vline & W \end{pmatrix}
\right\}, \quad
(\rM_{r|s\times p|q})_1=\left\{ \begin{pmatrix} 0 & \vline & Y \\ \hline Z & \vline & 0 \end{pmatrix}
\right\}
\end{align*}
where $X$, $Y$, $Z$, $W$ are
respectively $r \times p$, $r \times q$, $s \times p$ and
$s \times q$ matrices with entries in $k$. Whenever $p|q=r|s$,
we write just $\rM(r|s)$.

For a commutative superalgebra $\mathcal{A}$, $(\mathcal{A} \otimes \rM_{r|s\times p|q})_0$
are the {\it even supermatrices}, and $(\mathcal{A} \otimes \rM_{r|s\times p|q})_1$
are the {\it odd supermatrices}:
\begin{align*}
&(\mathcal{A} \otimes \rM_{r|s\times p|q})_0 =\left\{
\begin{pmatrix}
[a_{ij}] & \vline & [\alpha_{il}] \\
\hline [\beta_{kj}] & \vline & [b_{kl}]
\end{pmatrix} \right\}
\\[0.3cm]
&(\mathcal{A} \otimes \rM_{r|s\times p|q})_1=
\left\{
\begin{pmatrix}
[\sigma_{ij}] & \vline & [c_{il}] \\
\hline [d_{kj}] & \vline & [\delta_{kl}]%
\end{pmatrix}\right\}
\end{align*}
Notice that the vectors forming the first $p$ columns
of a $r|s\times p|q$ even supermatrix are even, while the vectors forming the last $q$
columns are odd. Vice-versa for odd supermatrices the first $p$ columns
consist of odd vectors and the last $q$ of even ones.

\medskip
The set of $\mathcal{A}$-points of the \textit{general linear supergroup}, denoted, as usual,
$\rGL(r|s)(\mathcal{A})$,
is the group of automorphisms of $\mathcal{A}^{r|s}$. It consists of
all invertible elements in $\rM(r|s)(\mathcal{A})$. They are characterized
by having invertible diagonal blocks (see, for example,  \cite{ccf}, Chapter 1).
One can then define the \textit{Berezinian}:
\begin{align}
\label{bereq}
\text{Ber}
\begin{pmatrix}X & \vline & Y\nonumber \\
\hline Z & \vline & W \end{pmatrix}
= &\mathrm{det}(X- YW^{-1}Z)\mathrm{det}(W)^{-1}=\\&\det(X)\det(W-ZX^{-1}Y)^{-1},
\end{align}
where `det' is the usual determinant.

\section {The super Cramer rule}\label{cramer-app}
We give here a proof of the super Cramer rule, Theorem \ref{supercramer}.

We need first some notation:  for any even vector $\mathbf{b}$ of size $r|s$, we denote by $\mathbf{b}^{(e)}$ and
$\mathbf{b}^{(o)}$ the column vectors consisting of even and odd coefficients
of $\mathbf{b}$, respectively:
  \begin{eqnarray*}
    \mathbf{b}=\begin{pmatrix}
        b_{1}\\
        \cdot \\
        \cdot \\
        \cdot \\
        b_{r}\\
        \hline
        b_{\hat{1}}\\
        \cdot \\
        \cdot \\
        \cdot \\
        b_{\hat{s}}
    \end{pmatrix},~~~~
    \mathbf{b}^{(e)}=\begin{pmatrix}
        b_{1}\\
        \cdot \\
        \cdot \\
        \cdot \\
        b_{r}
    \end{pmatrix}~~~~
    \mathbf{b}^{(o)}=\begin{pmatrix}
    b_{\hat{1}}\\
        \cdot \\
        \cdot \\
        \cdot \\
        b_{\hat{s}}
    \end{pmatrix}
    \end{eqnarray*}

Theorem \ref{supercramer} states:

{\it
Let $M\in \mathrm{GL}(r|s)$ be an invertible even supermatrix and let $\mathbf{b}\in \mathbf{k}^{r|s}$ be an even vector of size $r|s$, then the solution to the equation
$$M\mathbf{x}=\mathbf{b}$$
is:
\begin{align*}
&x_{i}=\dfrac{\Ber M_{i}(\mathbf{b})}{\Ber M}\qquad \hbox{for}\quad i=1,...,r
\\[0.3cm]&
x_{\hat{\jmath}}=\dfrac{\Ber^{*} M_{\hat{\jmath}}(\mathbf{b})}{\Ber^{*}M}\qquad \hbox{for}\quad\hat{\jmath}=\hat{1},...,\hat{s}
\end{align*}
where $M_{i}(\mathbf{b})$ is the supermatrix obtained by replacing $i$-th even column of $M$ with $\mathbf{b}$ and $M_{\hat{\jmath}}(\mathbf{b})$ is the fake supermatrix of type-II obtained by replacing $\hat{\jmath}$-th odd column of $M$ with $\mathbf{b}$.
}

\medskip
\begin{proof}
We have to show that the above statement holds for all $M\in \mathrm{GL}(r|s)$. We will do it in three steps.
\begin{enumerate}
    \item One can decompose any supermatrix $M\in \mathrm{GL}(r|s)$ as a product $M=M_{+}M_{0}M_{-}$ where $M_{+}, M_{0}$ and $M_{-}$ are supermatrices of the following types respectively:
    $$\begin{pmatrix}
        I & \vline& X\\
        \hline
        O &\vline& I
    \end{pmatrix},~~~ \begin{pmatrix}
        V &\vline& O\\
        \hline
        O &\vline& W
    \end{pmatrix},~~~
     \begin{pmatrix}
        I &\vline& O\\
        \hline
        Z &\vline& I
    \end{pmatrix},$$\\
    where $I$ denotes the identity matrix and $O$ denotes the null matrix. For $M=\begin{pmatrix}
        M_{1} &\vline& M_{2}\\
        \hline
        M_{3} &\vline& M_{4}
    \end{pmatrix}$, one gets:
    \begin{align*}
        X&=M_{2}M_{4}^{-1}\\
        V&=M_{1}-M_{2}M_{4}^{-1}M_{3}\\
        W&=M_{4}\\
        Z&=M_{4}^{-1}M_{3}.
    \end{align*}
\item The statement of the theorem holds for supermatrices of the form $C=AB$, where $A$ and $B$ are of the form $M_{0}$ and $M_{-}$ respectively.\\

Let $A=\begin{pmatrix}
    V &\vline& O\\
    \hline
    O &\vline& W
\end{pmatrix}$ and $B=\begin{pmatrix}
    I &\vline& O\\
    \hline
    Z &\vline& I
\end{pmatrix}$ so,

\begin{align*}
C&=\begin{pmatrix}
    V &\vline& O\\
    \hline
    WZ &\vline& W
\end{pmatrix},\qquad \hbox{and}\\\noalign{\vskip 10pt}
C^{-1}&= \begin{pmatrix}
    V^{-1} &\vline& O\\
    \hline
    -ZV^{-1} &\vline& W^{-1}
\end{pmatrix}=\begin{pmatrix}
    \begin{bmatrix}
     \dfrac{\det V_{i}(e_{j})}{\det V}
    \end{bmatrix} & \vline& O\\
    ~&\vline&\\
    \hline
    ~&\vline&\\
    \begin{bmatrix}
    \dfrac{-\sum_{i=1}^{r}z_{\hat{k}i}\det V_{i}(e_{j})}{\det V}
    \end{bmatrix} &\vline& \begin{bmatrix}
    \dfrac{\det W_{k}(e_{l})}{\det W}
    \end{bmatrix}
\end{pmatrix}\end{align*}
where $e_{j}$ denotes the column vector having $1$ at $j$-th place and $0$ at all other places. Therefore,

\begin{eqnarray}
    C^{-1}\mathbf{b}=
\begin{pmatrix}
            \dfrac{\sum_{j=1}^{r}\det V_{1}(e_{j})b_{j}}{\det V}\\
            .\\
            .\\
            .\\
            \dfrac{\sum_{j=1}^{r}\det V_{r}(e_{j})b_{j}}{\det V}\\
            ~\\
            \hline
            ~\\
            -\dfrac{\sum_{i,j=1}^{r} z_{\hat{1}i}\det V_{i}(e_{j})b_{j}}{\det V } +\dfrac{\sum_{{l}=\hat{1}}^{\hat{s}}\det W_{1}(e_{l})b_{l}}{\det W}\\
            .\\
            .\\
            .\\
            -\dfrac{\sum_{i,j=1}^{r} z_{\hat{s}i}\det V_{i}(e_{j})b_{j}}{\det V} +\dfrac{\sum_{{l}=\hat{1}}^{\hat{s}}\det W_{s}(e_{l})b_{l}}{\det W}
\end{pmatrix}. \label{C^{-1}b}
\end{eqnarray}

On the other hand, using the linearity of determinant, we have:
\begin{align}
\dfrac{\Ber C_{i}(\mathbf{b})}{\Ber C}=&\dfrac{\det W^{-1}\det V_{i}(\mathbf{b}^{(e)})}{\det W^{-1}\det V}=\dfrac{\det V_{i}(\mathbf{b}^{(e)})}{\det V }= \nonumber\\[0.3cm]
&\dfrac{\sum_{j=1}^{r}\det V_{i}(e_{j})b_{j}}{\det V} \label{C^{-1}b1}
\end{align}
and
\begin{eqnarray}
\dfrac{\Ber^{*}C_{\hat{k}}(\mathbf{b})}{\Ber^{*}C}&&=\dfrac{\det V^{-1}\det \left(W_{k}(\mathbf{b}^{(o)})-WZV^{-1}O_{k}(\mathbf{b}^{(e)})\right)}{\det V^{-1}\det W}= \nonumber\\[0.3cm]
&& \dfrac{\det W_{k}(\mathbf{b}^{(o)})-\det W_{k}(WZV^{-1}\mathbf{b}^{(e)})}{\det W}= \nonumber\\[0.3cm]
&& \dfrac{\sum_{l=\hat{1}}^{\hat{s}}\det W_{k}(e_{l})b_{l}}{\det W}-\dfrac{\sum_{i,j=1}^{r} z_{\hat{k}i}\det V_{i}(e_{j})b_{j}}{\det V}. \label{C^{-1}b2}
\end{eqnarray}
\hyphenation{com-pa-ring}
The last two equalities follows from the multi-linearity property of determinant and ordinary the  {Cramer rule}, respectively. Hence, comparing Eqs. (\ref{C^{-1}b}), (\ref{C^{-1}b1}) and (\ref{C^{-1}b2}) one realizes that the theorem holds for supermatrices of type $C$.

\item The theorem holds for supermatrices of type $D=EF$ where $F= \begin{pmatrix}
    U &\vline& Y\\
    \hline
    V &\vline& W
\end{pmatrix}$ is an even supermatrix which already satisfies the theorem and $E=\begin{pmatrix}
    I &\vline& X\\
    \hline
    O &\vline& I
\end{pmatrix}$ with $X$ having only one non-zero entry. Without loss of generality, we suppose that the top-left entry $x_{1\hat{1}}\neq 0$.\\

Therefore,

$$D=
\begin{pmatrix}
  \begin{matrix}
  u_{11}+x_{1\hat{1}}v_{\hat{1}1} &\cdots & u_{1r}+x_{1\hat{1}}v_{\hat{1}r}\\
  u_{21} &\cdots & u_{2r}\\
  \cdot  & \cdots  & \cdot \\
  \cdot  & \cdots  & \cdot \\
  u_{r1} & \cdots  & u_{rr}\\
  \end{matrix}
  & \vline &
  \begin{matrix}
  y_{1\hat{1}}+x_{1\hat{1}}w_{\hat{1}\hat{1}} &\cdots & y_{1\hat{s}}+x_{1\hat{1}}w_{\hat{1}\hat{s}}\\
  y_{2\hat{1}} &\cdots & y_{2\hat{s}}\\
  \cdot & \cdots  & \cdot\\
  \cdot & \cdots  & \cdot\\
  y_{r\hat{1}} & \cdots  & y_{r\hat{s}}\\
  \end{matrix}\\
\hline
  & \vline & \\
  V& \vline & W
\end{pmatrix}
$$

Note that,
\begin{eqnarray}
\textbf{x}=D^{-1}\mathbf{b}=F^{-1}(E^{-1}\mathbf{b}). \label{x}
\end{eqnarray}

Fix
$$\mathbf{c}=E^{-1}\mathbf{b}=\begin{pmatrix}
    b_{1}-x_{1\hat{1}}b_{\hat{1}}\\
    \cdot\\
    \cdot\\
    b_{r}\\
    \hline\\
    b_{\hat{1}}\\
    \cdot\\
    \cdot\\
    b_{\hat{s}}
\end{pmatrix}.$$

Since $F$ satisfies the theorem, equation (\ref{x}) implies
\begin{equation}
x_{i}=\dfrac{\Ber F_{i}(\mathbf{c})}{\Ber F}=
\dfrac{
\det\left( U_{i}(\mathbf{c}^{(e)})
-YW^{-1}V_{i}(\mathbf{b}^{(o)})
\right)
}{\det(U-YW^{-1}V)} \label{xi}
\end{equation}
and
\begin{equation}
x_{\hat{\jmath}}=\dfrac{\Ber^{*}F_{\hat{\jmath}}(\mathbf{c})}{\Ber^{*}F}=
\dfrac{\det\left(W_{\hat{\jmath}}(\mathbf{c}^{(o)})-VU^{-1}Y_{\hat{\jmath}}(\mathbf{c}^{(e)})\right)}{\det(W-VU^{-1}Y)}. \label{xj}
\end{equation}

On the other hand,
\begin{align}
    &\dfrac{\Ber D_{i}(\mathbf{b})}{\Ber D}=\nonumber\\[0.3cm]&\dfrac{\det\left((U+XV)_{i}(\mathbf{b}^{(e)})-(Y+XW)W^{-1}V_{i}(\mathbf{b}^{(o)})\right)}{\det(U-YW^{-1}V)}= \nonumber\\[0.3cm]
    &\dfrac{\det\left((U+XV)_{i}(\mathbf{b}^{(e)})-XV_{i}(\mathbf{b}^{(o)})-YW^{-1}V_{i}(\mathbf{b}^{(o)})\right)}{\det(U-YW^{-1}V) }= \nonumber\\[0.3cm]
    &\dfrac{\det\left(U_{i}(\mathbf{c}^{(e)})-YW^{-1}V_{i}(\mathbf{b}^{(o)})\right)}{\det(U-YW^{-1}V)} \label{xi1}
\end{align}
and
\begin{align}
    &\dfrac{\Ber^{*}D_{\hat{\jmath}}(\mathbf{b})}{\Ber^{*}D}=\dfrac{\det \left(W_{\hat{\jmath}}(\mathbf{b}^{(o)})-V(U+XV)^{-1}    (Y+XW)_{\hat{\jmath}}
    (\mathbf{b}^{(e)})\right)}{\det\left(W-V(U+XV)^{-1}(Y+XW)\right)}= \nonumber\\[0.3cm]
       &\dfrac{\det\left(W_{\hat{\jmath}}(\mathbf{b}^{(o)})-V(U^{-1}-U^{-1}XVU^{-1})\left(Y_{\hat{\jmath}}
    (\mathbf{c}^{(e)})+XW_{\hat{\jmath}}(\mathbf{b}^{(o)})\right)\right)}
    {\det\left(W-V(U^{-1}-U^{-1}XVU^{-1})(Y+XW)\right)}= \nonumber\\[0.3cm]
    &\dfrac{\det(W_{\hat{\jmath}}(\mathbf{b}^{(o)})-VU^{-1}Y_{\hat{j}}(\mathbf{c}^{(e)})-VU^{-1}XW_{\hat{\jmath}}(\mathbf{b}^{(o)})
    -VU^{-1}XVU^{-1}Y_{\hat{\jmath}}(\mathbf{c}^{(e)}))}
    {\det(W-VU^{-1}Y-VU^{-1}XW+VU^{-1}XVU^{-1}Y)}= \nonumber\\[0.3cm]
    &\dfrac{\det(I-VU^{-1}X)\det\left(W_{\hat{\jmath}}(\mathbf{c}^{(o)})-VU^{-1}Y_{\hat{\jmath}}(\mathbf{c}^{(e)})\right)}{\det(I-VU^{-1}X)
    \det(W-VU^{-1}Y)} = \nonumber\\[0.3cm]
    &\dfrac{\det\left(W_{\hat{\jmath}}(\mathbf{c}^{(o)})-VU^{-1}Y_{\hat{\jmath}}(\mathbf{c}^{(e)})\right)}{\det(W-VU^{-1}Y)}.\label{xj1}
    \end{align}
By comparing Eqs. (\ref{xi}), (\ref{xj}), (\ref{xi1}) and (\ref{xj1}), one concludes that the matrices of type $F$ satisfy the Theorem.
\end{enumerate}
One can easily observe that every even supermatrix $M \in \mathrm{GL}(r|s)$ is of type $F$ and this completes the proof.
\end{proof}

\begin{remark}\label{cramerfake}
It is important to note that, in the above proof, we never use the fact that $\mathbf{b}$ is even. The same expressions work for the equation $M\mathbf{x}=\mathbf{b}$ if $\mathbf{b}$ is an odd vector of size $r|s$.
\end{remark}

\section{The Jacobi identity and the Pl\"ucker relations}\label{jacobi-app}
We start by recalling a determinant identity due to Jacobi, the \textit{Jacobi complementary minor theorem} \cite{Jacobi} or
the \textit{Jacobi identity}
for short.

\begin{theorem}{\sl The Jacobi complementary minor theorem or Jacobi identity.}\label{J2}
Let $A$ be an invertible $n \times n $ matrix. Fix the two sets of indices:\hyphenation{in-ver-ti-ble}
$$
u=(n-r+1,...,n) \qquad and \qquad v=(1,...,r)
$$
Let $\tilde{u}$ and $\tilde{v}$ denote the complements of $u$ and $v$ respectively
in $(1,\dots, n)$. Then:
\begin{eqnarray} \label{Jacobi1}
    \det A\det(A^{-1})^{\Tilde{v}}_{\Tilde{u}}= (-1)^{r(n+1)} \det A^{u}_{v}
\end{eqnarray}
where $A^{u}_{v}$ denotes the matrix obtained from $A$ by deleting its rows and columns whose indices are contained in $u$ and $v$, respectively.
\end{theorem}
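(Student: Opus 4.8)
The plan is to prove the identity by a column-straightening argument, manipulating an auxiliary matrix built from $A^{-1}$ and reading off the sign only at the very end; this is exactly the template that the super version (Theorem \ref{superjacobi}) will later mimic. First I would introduce an $n\times n$ matrix $T$ whose first $r$ columns are the last $r$ columns of $A^{-1}$, that is $T_j=(A^{-1})_{n-r+j}$ for $1\le j\le r$, and whose remaining columns are the standard basis vectors $T_j=e_j$ for $r+1\le j\le n$. The columns $e_{r+1},\dots,e_n$ place an identity block $I_{n-r}$ in rows and columns $r+1,\dots,n$ and vanish on the first $r$ rows, so $T$ is block triangular of the form $T=\left(\begin{smallmatrix}(A^{-1})^{\Tilde v}_{\Tilde u} & 0\\ B & I_{n-r}\end{smallmatrix}\right)$. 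A one-line determinant computation then gives $\det T=\det (A^{-1})^{\Tilde v}_{\Tilde u}$, the surviving $r\times r$ block being precisely the top-right corner of $A^{-1}$ (rows $1,\dots,r$, columns $n-r+1,\dots,n$), which is $(A^{-1})^{\Tilde v}_{\Tilde u}$ with no spurious sign.

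Next I would invoke multiplicativity of the determinant to rewrite the left-hand side as $\det A\,\det (A^{-1})^{\Tilde v}_{\Tilde u}=\det A\,\det T=\det(AT)$, and compute $C:=AT$ column by column. The crucial simplification is the identity $A\,(A^{-1})_k=e_k$: applying $A$ to the $k$-th column of $A^{-1}$ returns the $k$-th standard basis vector. Hence for $1\le j\le r$ one finds $C_j=A\,(A^{-1})_{n-r+j}=e_{n-r+j}$, while for $r+1\le j\le n$ one has $C_j=A e_j=A_j$, the $j$-th column of $A$ itself. Thus $C$ has its first $r$ columns equal to $e_{n-r+1},\dots,e_n$ and its last $n-r$ columns equal to the corresponding columns of $A$.

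It then remains to evaluate $\det C$. The basis columns $e_{n-r+1},\dots,e_n$ pin down rows $n-r+1,\dots,n$, so after deleting those rows together with the first $r$ columns, the surviving $(n-r)\times(n-r)$ block is $A$ with rows $u=(n-r+1,\dots,n)$ and columns $v=(1,\dots,r)$ removed, i.e. $A^u_v$. Concretely $C=\left(\begin{smallmatrix}0 & A^u_v\\ I_r & *\end{smallmatrix}\right)$, and sliding the $r$ identity columns past the remaining $n-r$ columns yields a block-triangular matrix of determinant $\det A^u_v$ at the cost of the sign $(-1)^{r(n-r)}$. Assembling the chain produces $\det A\,\det (A^{-1})^{\Tilde v}_{\Tilde u}=(-1)^{r(n-r)}\det A^u_v$.

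The hard part—really the only delicate point, since everything else is bookkeeping—is the sign, and it is here that I would be most careful. I must reconcile the exponent $r(n-r)$ coming from the column shuffle with the exponent $r(n+1)$ claimed in the statement. This works out because $r(n+1)-r(n-r)=r(r+1)$ is always even, so $(-1)^{r(n-r)}=(-1)^{r(n+1)}$, and the identity follows exactly as stated. The main risk in a full write-up is miscounting either the transpositions in the final column slide or the position of the identity block in $T$; both are worth verifying explicitly on a small example such as $n=3$, $r=1$ before committing to the general sign.
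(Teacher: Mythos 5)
Your proposal is correct and follows essentially the same route as the paper's proof: the same auxiliary matrix $T$ (last $r$ columns of $A^{-1}$ padded with standard basis vectors), multiplicativity to pass to $\det(AT)$, and evaluation of the resulting matrix $\left(\begin{smallmatrix}0 & A^u_v\\ I_r & *\end{smallmatrix}\right)$. Your sign bookkeeping is also right, since $r(n+1)-r(n-r)=r(r+1)$ is even, so your exponent $r(n-r)$ agrees with the paper's $r(n+1)$.
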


\begin{proof}
We denote
$
A=\begin{bmatrix}
    a_{ij}
\end{bmatrix}_{n \times n}$
and,
$A^{-1}=\begin{bmatrix}
    b_{kl}
\end{bmatrix}_{n \times n}.$
Therefore,
$$(A^{-1})^{\Tilde{v}}_{\Tilde{u}}=\begin{pmatrix}
    b_{1(n-r+1)} &\cdots& b_{1n}\\
    \cdot &\cdots& \cdot \\
    \cdot &\cdots& \cdot. \\
    b_{r(n-r+1)} &\cdots& b_{rn}
\end{pmatrix}.$$
Notice that,
\begin{align}
\det(A^{-1})^{\Tilde{v}}_{\Tilde{u}}=&\det\begin{pmatrix}
    b_{1(n-r+1)} &\cdots & b_{1n}\\
    \cdot  &\cdots & \cdot  \\
    \cdot  &\cdots & \cdot  \\
    b_{r(n-r+1)} &\cdots & b_{rn}
\end{pmatrix}=\nonumber\\[0.3cm]
& \det \begin{pmatrix}
    b_{1(n-r+1)} &\cdots & b_{1n}& 0 &\cdots & 0  \\
    \cdot  &\cdots & \cdot  & \cdot  &\cdots & \cdot  \\
    \cdot  &\cdots & \cdot  & \cdot  &\cdots & \cdot \\
    b_{r(n-r+1)} &\cdots & b_{rn} &  0 &\cdots & 0  \\
     b_{(r+1) (n-r+1)} &\cdots & b_{(r+1)n} & 1 &\cdots & 0  \\
    \cdot  &\cdots & \cdot  & \cdot  &\cdots & \cdot \\
    \cdot  &\cdots & \cdot  & \cdot  &\cdots & \cdot \\
     b_{n(n-r+1)} &\cdots & b_{nn}& 0 &\cdots & 1
\end{pmatrix}_{n \times n} \label{J1}
\end{align}
Let us denote the $n \times n$ matrix that appeared in Equation (\ref{J1}) by $T$. Hence, using the multiplicative property of the determinant we get:
\begin{align}
 \det A\det(A^{-1})^{\Tilde{v}}_{\Tilde{u}}=& \det A\det T =\det(AT)= \nonumber\\[0.4cm]
 & \det \begin{pmatrix}
    0 &\cdots & 0 & a_{1 (r+1)} &\cdots & a_{1n}\\
    \cdot &\cdots & \cdot & \cdot &\cdots & \cdot \\
    \cdot  &\cdots & \cdot  & \cdot  &\cdots & \cdot \\
    0 &\cdots & 0 & a_{(n-r)(r+1)} &\cdots & a_{(n-r)n}\\
    1 &\cdots & 0 & a_{(n-r+1)r+1} &\cdots & a_{(n-r+1)n}\\
    \cdot  &\cdots & \cdot  & \cdot  &\cdots & \cdot \\
    \cdot  &\cdots & \cdot  & \cdot  &\cdots & \cdot \\
    0 &\cdots & 1 & a_{n(r+1)} &\cdots & a_{nn}
\end{pmatrix}=\nonumber\\[0.4cm]
&(-1)^{r(n+1)}\det \begin{pmatrix}
a_{1 (r+1)} &\cdots & a_{1n}\\
     \cdot  &\cdots & \cdot \\
     \cdot  &\cdots & \cdot \\
a_{(n-r)(r+1)} &\cdots & a_{(n-r)n}
\end{pmatrix}= \nonumber\\[0.4cm]
& (-1)^{r(n+1)} \det(A^{u}_{v}). \nonumber
\end{align}
as desired.
\end{proof}

\label{Plucker}

\hyphenation{ge-ne-ra-li-ze}

There are several and more general versions of the Jacobi identity,
(see \cite{grin}, Section 6 and references therein). However, we show now that the  Jacobi identity, as in
(\ref{Jacobi1}), is enough to recover the ordinary
Pl\"ucker relations as in Equation (\ref{ideal-plucker}) of Theorem \ref{fsft}.

\begin{proposition} \label{P1}
Let $X_{i_1\cdots i_r}$ denote the minor of $M\in\mathrm{M}_{r \times p}$ ($r\leq p$) corresponding 
\hyphenation{co-rres-pon-ding} to the submatrix formed by the columns $(i_1,\dots, i_r)$ of $M$. Then, the relations $$
\sum_{k=1}^{r+1} (-1)^{k} X_{i_{1}\cdots i_{r-1}j_{k}}X_{j_{1} \cdots \tilde{\jmath}_{k} \dots j_{r+1}} =0 $$
come directly from the Jacobi identity (\ref{Jacobi1}).
\end{proposition}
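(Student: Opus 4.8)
The plan is to realize the left-hand side of the relation as the Laplace expansion of a single $(r+1)\times(r+1)$ determinant that is manifestly zero, and to pin the vanishing on the cofactor content of the Jacobi identity (\ref{Jacobi1}). Write $c_1,\dots,c_p\in\C^{r}$ for the columns of $M$, so that $X_{a_1\cdots a_r}=\det[\,c_{a_1}\,|\cdots|\,c_{a_r}\,]$. First I would assemble the $(r+1)\times(r+1)$ matrix
\[
A=\begin{pmatrix}
x_{1j_1} & \cdots & x_{1j_{r+1}}\\
\vdots & & \vdots\\
x_{rj_1} & \cdots & x_{rj_{r+1}}\\
X_{i_1\cdots i_{r-1}j_1} & \cdots & X_{i_1\cdots i_{r-1}j_{r+1}}
\end{pmatrix},
\]
whose first $r$ rows are the coordinate rows of the columns $c_{j_1},\dots,c_{j_{r+1}}$ of $M$, and whose last row records the $r\times r$ minors $X_{i_1\cdots i_{r-1}j_k}$ obtained by adjoining the fixed column $c_{j_k}$ to the fixed block $c_{i_1},\dots,c_{i_{r-1}}$.

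Next I would expand $\det A$ along its last row. The cofactor of the entry in column $k$ is the determinant of the top $r\times r$ block with column $k$ deleted, that is $\det[\,c_{j_1}|\cdots|\widehat{c_{j_k}}|\cdots|c_{j_{r+1}}\,]=X_{j_1\cdots\tilde{\jmath}_k\cdots j_{r+1}}$, carried by the sign $(-1)^{(r+1)+k}$. Hence
\[
\det A=(-1)^{r+1}\sum_{k=1}^{r+1}(-1)^{k}\,X_{i_1\cdots i_{r-1}j_k}\,X_{j_1\cdots\tilde{\jmath}_k\cdots j_{r+1}},
\]
so that $\det A$ equals, up to the nonzero global factor $(-1)^{r+1}$, the left-hand side of the asserted Pl\"ucker relation.

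It then remains to show $\det A=0$, and this is where (\ref{Jacobi1}) enters. Expanding $X_{i_1\cdots i_{r-1}j_k}=\det[\,c_{i_1}|\cdots|c_{i_{r-1}}|c_{j_k}\,]$ along its last column gives $X_{i_1\cdots i_{r-1}j_k}=\sum_{m=1}^{r}w_m\,x_{mj_k}$, where $w_m=(-1)^{m+r}\det\big([\,c_{i_1}|\cdots|c_{i_{r-1}}\,]\text{ with row }m\text{ deleted}\big)$ is the $m$-th complementary cofactor of the fixed columns. The coefficients $w_m$ depend only on $c_{i_1},\dots,c_{i_{r-1}}$ and not on $k$; this is exactly the cofactor content underlying (\ref{Jacobi1}), namely its one-index specialization (Cramer's rule expressing the entries of an inverse as complementary cofactors over the determinant). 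Consequently the last row of $A$ is the single fixed linear combination $\sum_{m=1}^{r}w_m\cdot(\text{row }m)$ of its first $r$ rows, the rows of $A$ are linearly dependent, and $\det A=0$. Combining this with the previous display yields the relation.

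The hard part will be the bookkeeping in this last step: one must make explicit, through the cofactor/Cramer form of (\ref{Jacobi1}), that the weights $w_m$ are genuinely the \emph{same} for every column $k$ (so that the minor row really is one linear combination of the coordinate rows rather than $r+1$ unrelated expansions), and one must track the Laplace signs so that the expansion reproduces the alternating factor $(-1)^{k}$ precisely as written in (\ref{ideal-plucker}). Everything else is a routine unwinding of the definition of the minors $X_{a_1\cdots a_r}$.
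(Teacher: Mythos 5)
Your argument is a correct and complete proof of the Pl\"ucker relations, but it does not prove the proposition as stated, because the Jacobi identity (\ref{Jacobi1}) is never actually invoked. Your two steps are (i) a Laplace expansion of the $(r+1)\times(r+1)$ determinant along its last row, and (ii) the observation that this last row equals the fixed linear combination $\sum_{m=1}^{r}w_m\cdot(\text{row }m)$ of the first $r$ rows, where the weights $w_m$ are cofactors of the $r\times(r-1)$ array $[\,c_{i_1}|\cdots|c_{i_{r-1}}\,]$. Both steps are instances of multilinearity and Laplace expansion of the determinant; no inverse matrix appears anywhere in your argument, whereas (\ref{Jacobi1}) is precisely a statement about complementary minors of $A^{-1}$. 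The claim that the $k$-independence of the $w_m$ is ``the cofactor content underlying (\ref{Jacobi1})'' therefore does not hold up: the $w_m$ are determinants of $(r-1)\times(r-1)$ submatrices of a non-square array, defined without inverting anything, and identifying them with entries of an inverse would force you to introduce a matrix depending on $k$ --- exactly what your argument must avoid. (Incidentally, the ``hard bookkeeping'' you defer at the end is in fact trivial; what cannot be completed is the identification of that step with (\ref{Jacobi1}).)

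By contrast, the paper's proof genuinely runs through the identity: it forms a $2r\times 2r$ matrix $A$ whose top blocks are the columns $i_1,\dots,i_r,j_1,\dots,j_r$ and whose bottom blocks are $O$ and $I_r$, applies (\ref{Jacobi1}) to obtain $X_{j_{1}\cdots j_{r}}=(-1)^{r}X_{i_{1}\cdots i_{r}}\det(A^{-1})^{\tilde{v}}_{\tilde{u}}$, computes the entries of $(A^{-1})^{\tilde{v}}_{\tilde{u}}$ by Cramer's rule as quotients of maximal minors, and then evaluates the cofactors in the resulting expansion by a second application of (\ref{Jacobi1}) (specializing $j_1=i_1$), arriving at the exchange form $X_{i_{1}\cdots i_{r}}X_{j_{1}\cdots j_{r}}=\sum_{t=1}^{r}X_{j_{t}i_{2}\cdots i_{r}}X_{j_{1}\cdots j_{t-1}i_{1}j_{t+1}\cdots j_{r}}$, equivalent to (\ref{ideal-plucker}) after renaming indices. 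This detour is the entire point of the proposition: Berezinians are not multilinear in rows or columns and admit no Laplace expansion, so your row-dependence argument has no super analogue, whereas the Jacobi identity and Cramer's rule do (Theorems \ref{superjacobi} and \ref{supercramer}), and it is exactly this scheme that is transplanted to produce the super Pl\"ucker relations in Theorems \ref{super-pl-thm} and \ref{super-pl}. To repair your proposal you would need to rederive the vanishing of your determinant as an application of (\ref{Jacobi1}) itself, rather than of row dependence; as written, you have proved the relations, but not that they ``come directly from the Jacobi identity.''
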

\begin{proof}
Fix an $r \times p$ matrix:
\begin{eqnarray*}
    C=\begin{pmatrix}
        a_{11} & a_{12} & \cdots  & a_{1p}\\
        \cdot  & \cdot  & \cdots  & \cdot \\
        \cdot  & \cdot  & \cdots  & \cdot \\
        \cdot  & \cdot  & \cdots  & \cdot \\
        a_{r1} & a_{r2} & \cdots  & a_{rp}
    \end{pmatrix}
\end{eqnarray*}

Set any two sets $\{i_{1},...,i_{r}\}$ and $\{j_{1},...,j_{r}\}$ of ordered indices from $\{1,...,p\}$ and define the following $2r \times 2r$ matrix:
\begin{eqnarray*}
 A= \begin{pmatrix}
        a_{1i_{1}} & \cdots & a_{1i_{r}} & a_{1j_{1}} & \cdots & a_{1j_{r}}\\
        \cdot  & \cdots & \cdot  & \cdot  &\cdots & \cdot \\
        \cdot  & \cdots & \cdot  & \cdot  &\cdots & \cdot \\
         a_{ri_{1}} & \cdot \cdot \cdot  & a_{ri_{r}} & a_{rj_{1}} & \cdot \cdot \cdot  & a_{rj_{r}}\\
         0 & \cdots & 0 & 1 &\cdots & 0\\
         \cdot  & \cdots & \cdot  & \cdot  &\cdots & \cdot \\
         \cdot  & \cdots & \cdot  & \cdot  &\cdots & \cdot \\
         0 & \cdots & 0 & 0 &\cdots & 1\\
    \end{pmatrix} \quad \text{which satisfies} \quad \det A=X_{i_{1}\cdots i_{r}}.
\end{eqnarray*}

\medskip

Suppose that $X_{i_{1}\cdots i_{r}}$ is invertible. Now, fix
$$u=(r+1,\dots, 2r) \qquad \hbox{and} \qquad v=(1,\dots,r),$$
which implies,
$$\Tilde{u}=(1,\dots,r) \qquad \hbox{and} \qquad \Tilde{v}=(r+1,\dots,2r).$$
Therefore,
$$
    A_{v}^{u}=\begin{pmatrix}
         a_{1j_{1}} & \cdots & a_{1j_{r}}\\
         \cdot  &\cdots & \cdot \\
         \cdot  &\cdots & \cdot \\
         a_{rj_{1}} & \cdots & a_{rj_{r}}
    \end{pmatrix} \quad \text{and} \quad \det A_{v}^{u}=X_{j_{1}\cdots j_{r}}.
$$
Putting this in Equation (\ref{Jacobi1}) one gets
\begin{eqnarray}
X_{j_{1}\cdots j_{r}}= (-1)^{r}X_{i_{1}\cdots i_{r}}\det(A^{-1})^{\Tilde{v}}_{\Tilde{u}}. \label{syl1}
\end{eqnarray}
Now,
\begin{eqnarray*}
 (A^{-1})^{\Tilde{v}}_{\Tilde{u}}=\begin{bmatrix}
    (A^{-1})_{kl}
 \end{bmatrix}_{\substack{k=1,...,r \\ l=r+1,...,2r}}
\end{eqnarray*}
Using Cramer's rule, we know that:
\begin{eqnarray*}
(A^{-1})_{kl}=\dfrac{\det A_{k}(e_{l})}{\det(A)}.
\end{eqnarray*}
Therefore, one can easily compute,
\begin{eqnarray*}
(A^{-1})^{\Tilde{v}}_{\Tilde{u}} = \begin{pmatrix}
  -X_{j_{1}i_2\cdots i_{r}}X_{i_{1}\cdots i_{r}}^{-1} & -X_{j_{2}i_{2}\cdots i_{r}}X_{i_{1}\cdots i_{r}}^{-1} &\cdots  & -X_{j_{r}i_{2}\cdots i_{r}}X_{i_{1}\cdots i_{r}}^{-1}\\
   -X_{i_{1}j_{1} i_{3} \cdots i_{r}}X_{i_{1}\cdots i_{r}}^{-1} & -X_{i_{1}j_{2} i_{3}\cdots i_{r}}X_{i_{1}\cdots i_{r}}^{-1} & \cdots  & -X_{i_{1}j_{r}i_{3}\cdots i_{r}}X_{i_{1}\cdots i_{r}}^{-1}\\
   \cdot  & \cdot  & \cdots  & \cdot \\
    \cdot  & \cdot  & \cdots  & \cdot \\
   -X_{i_{1}i_{2}\cdots j_{1}i_{r}}X_{i_{1}\cdots i_{r}}^{-1} & -X_{i_{1}i_{2}\cdots j_{2}i_{r}}X_{i_{1}\cdots i_{r}}^{-1} & \cdots  & -X_{i_{1}i_{2}\cdots j_{r}i_{r}}X_{i_{1}\cdots i_{r}}^{-1}\\
   -X_{i_{1}i_{2}\cdots i_{r-1}j_{1}}X_{i_{1}\cdots i_{r}}^{-1} &  -X_{i_{1}i_{2}\cdots i_{r-1}j_{2}}X_{i_{1}\cdots i_{r}}^{-1} & \cdots  &  -X_{i_{1}i_{2}\cdots i_{r-1}j_{r}}X_{i_{1}\cdots i_{r}}^{-1}
 \end{pmatrix}
\end{eqnarray*}
The Laplace expansion of the determinant, using the first row, gives us:
\begin{eqnarray}
\det(A^{-1})^{\Tilde{v}}_{\Tilde{u}}= (X_{i_{1}\cdots i_{r}})^{-1} \sum_{t=1}^{r} -X_{j_{t}i_{2}\cdots i_{r}} c_{1,t}  \label{Pl}
\end{eqnarray}
where $c_{1,t}$ is the cofactor of the entry $\left((A^{-1})^{\Tilde{v}}_{\Tilde{u}}\right)_{1t}$. Therefore, substituting these in Equation (\ref{syl1}) we get:
\begin{eqnarray}
 X_{j_{1}\cdots j_{r}}= (-1)^{r+1}\sum_{t=1}^{r} X_{j_{t}i_{2}\cdots i_{r}} c_{1,t}. \label{syl2}
\end{eqnarray}
Now, to calculate $c_{1,1}$, consider the same calculations with a matrix $A'$ where we fix $j_{1}=i_{1}$. This gives us:
\begin{eqnarray*}
(A'^{-1})^{\Tilde{v}}_{\Tilde{u}}=\begin{pmatrix}
  -1 & -X_{j_{2}i_{2}\cdots i_{r}}X_{i_{1}\cdots i_{r}}^{-1} & \cdots  & -X_{j_{r}i_{2}..i_{r}}X_{i_{1}\cdots i_{r}}^{-1}\\
  0 & -X_{i_{1}j_{2} i_{3}..i_{r}}X_{i_{1}\cdots i_{r}}^{-1} & \cdots  & -X_{i_{1}j_{r}i_{3}..i_{r}}X_{i_{1}\cdots i_{r}}^{-1}\\
   . & . & \cdots  & .\\
    . & . & \cdots  & .\\
   0& -X_{i_{1}i_{2}\cdots j_{2}i_{r}}X_{i_{1}\cdots i_{r}}^{-1} & \cdots  & -X_{i_{1}\cdots j_{r}i_{r}}X_{i_{1}\cdots i_{r}}^{-1}\\
   0 &  -X_{i_{1}i_{2}\cdots i_{r-1}j_{2}}X_{i_{1}\cdots i_{r}}^{-1} & \cdots  &  -X_{i_{1}i_{2}\cdots i_{r-1}j_{r}}X_{i_{1}\cdots i_{r}}^{-1}
 \end{pmatrix}
\end{eqnarray*}
and from Equation (\ref{Jacobi1}) it follows that:
\begin{eqnarray*}
X_{i_{1}j_{2}\cdots j_{r}}&=& (-1)^{r}X_{i_{1}\cdots i_{r}} \det(A'^{-1})^{\Tilde{v}}_{\Tilde{u}}\\
&=& (-1)^{r+1}X_{i_{1}\cdots i_{r}}\, c_{1,1}
\end{eqnarray*}
Hence, we get:
\begin{eqnarray*}
c_{1,1}=(-1)^{r+1} X_{i_{1}j_{2}\cdots j_{r}} (X_{i_{1}\cdots i_{r}})^{-1}.
\end{eqnarray*}
Similarly, one can get:
\begin{eqnarray*}
c_{1,t}=(-1)^{r+1}X_{j_{1}\cdots j_{t-1}i_{1}j_{t+1}\cdots j_{r}} (X_{i_{1}\cdots i_{r}})^{-1}.
\end{eqnarray*}
By putting this in Equation (\ref{syl2}) we get:
\begin{eqnarray}
 X_{i_{1}\cdots i_{r}} X_{j_{1}\cdots j_{r}}= \sum_{t=1}^{r} X_{j_{t}i_{2}\cdots i_{r}}X_{j_{1}\cdots j_{t-1}i_{1}j_{t+1}\cdots j_{r}}
\end{eqnarray}
which are exactly the Pl\"{u}cker relations in (\ref{ideal-plucker}), up to renaming some indices.
\end{proof}

\medskip
We illustrate this result with an example given below.
\begin{example}
Fix a $2\times 4$ matrix
\begin{eqnarray*}
   C= \begin{pmatrix}
        a_{11} & a_{12} & a_{13} & a_{14}\\
        a_{21} & a_{22} & a_{23} & a_{24}
    \end{pmatrix}
    \end{eqnarray*}
and set $X_{ij}:=\det \begin{pmatrix}
    a_{1i} & a_{1j}\\
    a_{2i} & a_{2j}
\end{pmatrix}.$ Suppose that $X_{12}$ is invertible. Now, consider the following matrix:
\begin{eqnarray*}
   A= \begin{pmatrix}
        a_{11} & a_{12} & a_{13} & a_{14}\\
        a_{21} & a_{22} & a_{23} & a_{24}\\
        0 & 0 & 1 & 0\\
        0 & 0 & 0 & 1
    \end{pmatrix}, \qquad \text{which satisfies} \qquad \det(A)= X_{12}.
\end{eqnarray*}
By fixing $u=(3,4)$ and $v=(1,2)$, we get:
\begin{eqnarray*}
\det A_{v}^{u}=\det \begin{pmatrix}
a_{13} & a_{14}\\
a_{23} & a_{24}
\end{pmatrix}=X_{34}.
\end{eqnarray*}
Moreover, using Cramer's rule, one can easily compute:
\begin{eqnarray*}
(A^{-1})^{\Tilde{v}}_{\Tilde{u}}=\begin{pmatrix}
    (A^{-1})_{13} & (A^{-1})_{14}\\
    (A^{-1})_{23} & (A^{-1})_{24}
\end{pmatrix}=\begin{pmatrix}
    X_{23}X_{12}^{-1} & X_{24}X_{12}^{-1}\\
    -X_{13}X_{12}^{-1} & -X_{14}X_{12}^{-1}
\end{pmatrix}.
\end{eqnarray*}
Hence, substituting these values in Jacobi's identity,
\begin{align*}
&\det(A_{v}^{u})= \det(A) \det(A^{-1})^{\Tilde{v}}_{\Tilde{u}}&
\end{align*}
we get,
\begin{align*}
&X_{34}= X_{12}\det \begin{pmatrix}
X_{23}X_{12}^{-1} & X_{24}X_{12}^{-1} \\
-X_{13}X_{12}^{-1} & -X_{14}X_{12}^{-1}
\end{pmatrix}& \\[0.3cm]
& \Rightarrow \qquad X_{12}X_{34}-X_{13}X_{24}+X_{14}X_{23}=0&
\end{align*}
which is exactly the Pl\"{u}cker relation in this case.
\end{example}

\end{document}